\newcommand{\dist}{\operatorname{dist}}
\newcommand{\Csmooth}{$\mathcal{C}^1$-smooth }
\newcommand{\Real}{\mathbb{R}}
\newcommand{\Lip}{\operatorname{Lip}}
\newcommand{\supp}{\operatorname{supp}}
\newtheorem{thm}{Theorem}[section]
\newtheorem{cor}[thm]{Corollary}
\newtheorem{lem}[thm]{Lemma}
\newtheorem{defn}[thm]{Definition}
\numberwithin{equation}{section}
\begin{document}

\title[Smooth extension on Banach spaces]{Smooth extension of functions on a certain class of non-separable Banach spaces}

\author{Mar Jim{\'e}nez-Sevilla and Luis S\'anchez-Gonz\'alez}

\address{Departamento de An{\'a}lisis Matem{\'a}tico\\ Facultad de
Matem{\'a}ticas\\ Universidad Complutense\\ 28040 Madrid, Spain}

\thanks{Supported in part by DGES (Spain) Project MTM2009-07848. L. S\'anchez-Gonz\'alez has  also been supported by grant MEC AP2007-00868}

\email{marjim@mat.ucm.es,
lfsanche@mat.ucm.es}

\keywords{Smooth extensions; smooth approximations.}

\subjclass[2000]{46B20}

\date{January 19, 2010}

\maketitle


\begin{abstract}
Let us consider a Banach space $X$ with the property that every real-valued Lipschitz function $f$ can be uniformly approximated by  a Lipschitz, $\mathcal{C}^1$-smooth function $g$ with $\Lip(g)\le C \Lip(f)$ (with $C$ depending only on the space $X$).
This is the case  for a  Banach space $X$  bi-Lipschitz homeomorphic to a subset of $c_0(\Gamma)$, for some set $\Gamma$, such that the coordinate functions of the homeomorphism are $\mathcal{C}^1$-smooth (\cite{Hajek}). Then, we prove
that  for every closed subspace $Y\subset X$ and   every \Csmooth (Lipschitz) function  $f:Y\to\Real$, there is a \Csmooth (Lipschitz, respectively) extension of $f$ to $X$. We also study \Csmooth extensions of  real-valued  functions defined on closed  subsets of $X$. These results extend those  given in \cite{Azafrykeener} to the class of non-separable Banach spaces satisfying the above property.

\end{abstract}


\section{Introduction and main results}

In this note we consider the problem of the extension of a smooth function from a subspace of an infinite-dimensional Banach space to a smooth function on the whole space.  More precisely, if $X$ is an  infinite-dimensional Banach space, $Y$ is a closed subspace of $X$ and $f:Y \to \Real$ is a $\mathcal{C}^k$-smooth function, under what conditions does there exist a $\mathcal{C}^k$-smooth function $F:X\to \Real$ such that $F_{\mid_Y}=f$?
Under the assumption that $Y$ is a complemented subspace of a Banach space, an extension of a smooth function $f:Y\to \Real$ is easily found taking the function $F(x)=f(P(x))$, where $P:X\to Y$ is a continuous linear projection. But this extension does not solve the problem since if a Banach space $X$ is not isomorphic to a Hilbert space, then it
has a closed subspace which is not complemented in $X$ \cite{LindTzaf}.

 C. J. Atkin  in \cite{atkin} extends every smooth function $f$ defined on a finite union of open convex sets in a separable Banach space which does not admit smooth bump functions, provided that for every point in the domain of $f$, the restriction of $f$ to a suitable neighborhood of the point can be extended to the whole space. The most fundamental result  has been given by D. Azagra, R. Fry and L. Keener \cite{Azafrykeener, Azafrykeener2}. They have shown that if $X$ is a  Banach space with separable dual $X^*$, $Y\subset X$ is a closed subspace and $f: Y \to \Real$ is a \Csmooth function, then there exists a $\mathcal{C}^1$-smooth extension $F:X\to \Real$ of $f$.  They proved a similar result when $Y$ is a closed convex subset, $f$ is defined on an open set $U$ containing $Y$ and $f$ is \Csmooth on $Y$ as a function on $X$ (i.e., $f:U \rightarrow \mathbb R$ is differentiable at every point $y\in Y$ and the function $Y\mapsto X^*$ defined as $y \mapsto f'(y)$ is continuous on $Y$).
 For a detailed account of the related theory of (smooth) extensions to $\mathbb R^n$ of smooth functions defined in closed subsets of  $\mathbb R^n$ see \cite{Azafrykeener}. Let us point out that the case of analytic maps is quite different (see \cite{AronBerner}).

The aim of this note is to extend the  results in \cite{Azafrykeener}  to the general setting of Banach spaces where every Lipschitz function can be approximated by a $\mathcal{C}^1$-smooth, Lipschitz function. By using the results of Lipschitz and smooth approximation of Lipschitz mappings given by  P. H\'ajek and M. Johanis \cite{HajJohc0, Hajek} we shall extend the results in \cite{Azafrykeener} to a larger class of  Banach spaces.
We proceed along the same lines as the proof of the separable case \cite{Azafrykeener}. Additionally, we shall use the open coverings given by M. E. Rudin, and the ideas of M. Moulis \cite{Moulis},  P. H\'ajek and M. Johanis \cite{Hajek}.

\smallskip
The notation we use is standard. In addition, we shall follow, whenever possible, the notations given in \cite{Azafrykeener}
and \cite{Hajek}. We denote by $||\cdot||$ the norm considered in $X$ and by $B(x,r)$ the open ball with center $x\in X$ and radius $r>0$ .
If $Y$ is a subspace of $X$ we denote the restriction of a function $f:X \to \Real$ to $Y$ by $f_{\mid_Y}$ and we say that $F:X\to \Real$ is an extension of $f:Y\to \Real$ if $F_{\mid_Y}=f$.
Recall that  $\Lip(h)$ denotes the Lipschitz constant of a Lipschitz function $h:Y\rightarrow \mathbb R$, where $Y$ is a subset of a Banach space $X$. We refer to \cite{Deville} or \cite{fabianhajek} for any other definition.

Before stating the main results, let us define the property $(*)$ as the following Lipschitz and $\mathcal{C}^1$-smooth approximation property for Lipschitz mappings.
\begin{defn}
A Banach space $X$ satisfies  property $(*)$ if there is a constant $C_0$, which only  depends on the space $X$, such that, for any Lipschitz function $f:X\to \Real$ and any $\varepsilon>0$ there is a Lipschitz, \Csmooth function $K:X\to \Real$ such that
\begin{equation*}
|f(x)-K(x)|<\varepsilon \text{ for all } x\in X   \text{ and } \Lip(K)\leq C_0 \Lip(f).
\end{equation*}
\end{defn}
We may equivalently say that $X$ satisfies  property $(*)$ if there is a constant $C_0$, which only depends on $X$, such that for any subset $Y\subset X$, any Lipschitz function $f:Y\to \Real$ and any $\varepsilon>0$ there is a $\mathcal{C}^1$-smooth, Lipschitz function $K:X\to \Real$ such that
\begin{equation*}
|f(y)-K(y)|<\varepsilon \ \text{ for all  } y\in Y  \text{ and } \Lip(K)\le C_0\Lip(f).
\end{equation*}
Indeed, every real-valued Lipschitz function $f$ defined on $Y$ can be extended  to a Lipschitz function on $X$ with the same Lipschitz constant (for instance $F(x)=\inf_{y\in Y}\{f(y)+\Lip(f)||x-y||\}$).

Let us recall that every  Banach space with separable dual satisfies property $(*)$ \cite{Fry1,Hajek} (see also \cite{Azafrykeener}).  J.M. Lasry and P.L. Lions  proved in \cite{LL} that in a Hilbert space $H$ and for
every Lipschitz function $f:H\to\Real$ and every $\varepsilon>0$ there exists a \Csmooth and Lipschitz function
$g:H\to\Real$ such that $|f(x)-g(x)|<\varepsilon$ for every $x\in X$ and $\Lip(g)=\Lip(f)$ (see also \cite{AFLMR}).
 Let us notice that the approximation in \cite{LL} is for bounded functions. However it also works  for unbounded Lipschitz functions. P. H\'ajek and M. Johanis have proved in \cite{HajJohc0} that for any set $\Gamma$,  $c_0(\Gamma)$ satisfies property $(*)$. Also, they give a sufficient condition for $X$ to have property $(*)$: if there is a bi-Lipschitz homeomorphism
$\varphi$ embedding  $X$ into $c_0(\Gamma)$ whose coordinates $e_\gamma^*\circ \varphi$ are $\mathcal{C}^1$-smooth,
then $X$ satisfies property $(*)$ \cite{Hajek}.
More specifically, they showed the following characterization:  a Banach space $X$ has property $(*)$ and
is uniformly homeomorphic to a subset of $c_0(\Gamma)$ (for some set $\Gamma$) if and only if there is a bi-Lipschitz homeomorphism $\varphi$ embedding    $X$ into $c_0(\Gamma)$ whose coordinates $e_\gamma^*\circ \varphi$ are $\mathcal{C}^1$-smooth. Let us note that there is a gap in the proof  that WCG Banach spaces with a $C^1$-smooth and Lipschitz bump function satisfy property $(*)$  given in \cite{FryWCG}  and it is unknown  if the result holds.


 We shall prove that, if $X$ satisfies property $(*)$  and $Y$ is a closed subspace of $X$, then for every   \Csmooth real-valued function $f$ defined on $Y$,  there is a $\mathcal{C}^1$-extension of $f$ to $X$.



%

\begin{thm}\label{extension}
Let $X$ be a  Banach space with property $(*)$. Let $Y\subset X$ be a closed subspace and  $f:Y\to\mathbb{R}$ a \Csmooth function. Then there is a \Csmooth extension of $f$ to $X$.\

Furthermore, if the given \Csmooth function $f$ is Lipschitz on $Y$, then there is  a \Csmooth and Lipschitz extension $H:X\to \mathbb{R}$  of $f$ to $X$ such that $\Lip(H)\leq C \Lip(f)$, where $C$ is a constant depending only on $X$.
\end{thm}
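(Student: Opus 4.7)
The plan is to mimic the separable-dual argument of \cite{Azafrykeener}, with property $(*)$ providing smooth Lipschitz approximations in place of the separable-dual smoothing, and Rudin's theorem supplying the appropriate locally finite open refinements in the non-separable setting. First, for each $y\in Y$, use Hahn-Banach to extend $f'(y)\in Y^*$ to some $L_y\in X^*$ with $\|L_y\|=\|f'(y)\|$, and set $A_y(x)=f(y)+L_y(x-y)$. These affine $\mathcal{C}^1$-smooth functions serve as the local first-order approximations of $f$ at the base point $y\in Y$ and will be patched together on $X\setminus Y$.

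Next, to each $x\in X\setminus Y$ I associate a base point $y_x\in Y$ with $\|x-y_x\|\le 2\dist(x,Y)$ and a radius $r_x$, comparable to $\dist(x,Y)$ but small enough, relative to the modulus of continuity of $y\mapsto L_y$ on $B(y_x,3r_x)\cap Y$, that any two affine approximations based at points of $Y$ meeting $B(x,r_x)$ agree to first order on $B(x,r_x)$. Applying the Rudin theorem (used in \cite{Hajek, Moulis}) to the cover $\{B(x,r_x)\}_{x\in X\setminus Y}$ yields a $\sigma$-discrete, locally finite open refinement $\{U_i\}_{i\in I}$ with retained base points $y_i\in Y$. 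On this refinement I build a $\mathcal{C}^1$-smooth partition of unity $\{\phi_i\}_{i\in I}$ subordinate to $\{U_i\}$ by starting from the Lipschitz partition of unity available in every metric space and invoking property $(*)$ to smooth each of its members, restoring the partition-of-unity identity and preserving local finiteness by composition with an auxiliary $\mathcal{C}^1$-smooth truncation, as in \cite{HajJohc0, Hajek}. I then define
\begin{equation*}
F(x)=\begin{cases} f(x), & x\in Y,\\ \sum_{i\in I}\phi_i(x)\,A_{y_i}(x), & x\in X\setminus Y. \end{cases}
\end{equation*}

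On the open set $X\setminus Y$ the function $F$ is $\mathcal{C}^1$-smooth by local finiteness. The main obstacle is differentiability together with continuity of the derivative at points $y_0\in Y$. Using that $\|y_i-y_0\|\le C\|x-y_0\|$ whenever $\phi_i(x)\ne 0$ and $x$ is close enough to $y_0$, writing
\begin{equation*}
F(x)-A_{y_0}(x)=\sum_{i\in I}\phi_i(x)\bigl(A_{y_i}(x)-A_{y_0}(x)\bigr),
\end{equation*}
exploiting $\sum_i\phi_i(x)\equiv 1$, and invoking continuity of $y\mapsto L_y$ on bounded subsets of $Y$ (from $\mathcal{C}^1$-smoothness of $f$), I obtain $F(x)=f(y_0)+L_{y_0}(x-y_0)+o(\|x-y_0\|)$, whence $F'(y_0)=L_{y_0}$. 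A parallel estimate for the derivative, in which the ``angular'' terms cancel because $\sum_i\phi_i'(x)\equiv 0$, yields continuity of $F'$ at $y_0$. For the Lipschitz statement, the extensions $L_y$ inherit $\|L_y\|\le \Lip(f)$, and property $(*)$ can be used to produce $\phi_i$ with Lipschitz constants bounded by a multiple of $1/r_i$; summing contributions scale by scale then gives $\Lip(H)\le C\,\Lip(f)$ for a constant $C$ depending only on $X$.
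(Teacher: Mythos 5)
Your proposal is the direct Whitney-type gluing: patch the affine local models $A_{y_i}(x)=f(y_i)+L_{y_i}(x-y_i)$ on $X\setminus Y$ with a partition of unity at scale $\dist(x,Y)$ and set $F=f$ on $Y$. The fatal step is the claim that differentiability of $F$ at $y_0\in Y$ follows by ``invoking continuity of $y\mapsto L_y$''. What $\mathcal{C}^1$-smoothness of $f$ gives you is continuity of $y\mapsto f'(y)$ \emph{in $Y^*$}; it does not give continuity of $y\mapsto L_y$ in $X^*$, because $L_y$ is an arbitrary Hahn--Banach extension and there is in general no norm-continuous selection of norm-preserving extensions from $Y^*$ into $X^*$ (a continuous bounded extension selector is essentially what one lacks when $Y$ is uncomplemented). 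Concretely, in your key identity the summands contain the term $(L_{y_i}-L_{y_0})(x-y_i)$ with $\|x-y_i\|$ comparable to $\dist(x,Y)$, hence comparable to $\|x-y_0\|$ along suitable sequences; since $\|L_{y_i}-L_{y_0}\|_{X^*}$ need not tend to $0$ even though $\|f'(y_i)-f'(y_0)\|_{Y^*}\to 0$, this term is $O(\|x-y_0\|)$ but not $o(\|x-y_0\|)$, and $F$ fails to be differentiable at $y_0$. The same defect destroys the derivative estimate: $\sum_i\phi_i'(x)\bigl(A_{y_i}(x)-A_{y_0}(x)\bigr)$ involves $\|\phi_i'\|\sim 1/r_i\sim 1/\dist(x,Y)$ against differences that are only $O(\dist(x,Y))$, so it need not tend to $0$.

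This is precisely the obstruction that forces the paper's architecture. The paper never attempts an exact one-shot extension: Theorem \ref{laclave} only produces a global $\mathcal{C}^1$ function $G$ with $|F-G|<\varepsilon$ and with the derivative error measured in $Y^*$ (where the Hahn--Banach ambiguity is invisible, since $T_\gamma'|_Y=f'(y_\gamma)$), and it needs the correction terms $\delta_{n,\gamma}$ from Lemma \ref{lemma:approximation} so that $|\Delta^n_\beta-F|<\varepsilon/(2^{n+2}L_{n,\beta})$ everywhere on $X$, making the products with $\Lip(\psi_{n,\beta})$ summable --- your scheme has no analogue of these corrections. The exact extension is then obtained by iterating: $f-g_1$ is Lipschitz with constant $\le\varepsilon/2C_2$, one extends it, approximates again, and the telescoping series $H=\sum_n G_n$ converges in $\mathcal{C}^1$ to an extension with $H|_Y=f$ and, in the Lipschitz case, $\Lip(H)\le(C_2+1)\Lip(f)$. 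To repair your argument you would have to either supply a continuous bounded extension map $Y^*\to X^*$ (not available in general) or abandon exactness on $Y$ and pass to the approximation-plus-iteration scheme, which is the paper's proof.
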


\begin{cor}\label{manifold}
Let $M$ be a paracompact $\mathcal{C}^1$-smooth Banach manifold modeled on  a  Banach space $X$  with property $(*)$ (in particular, any Riemannian manifold), and let $N$ be a closed \Csmooth submanifold of $M$. Then, every \Csmooth function $f:N \to \mathbb{R}$ has a \Csmooth extension to $M$.
\end{cor}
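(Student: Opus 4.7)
The plan is to reduce the statement to Theorem~\ref{extension} by working in submanifold charts and then to patch the resulting local extensions together by means of a \Csmooth partition of unity on $M$.

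For each $p\in N$, the definition of a \Csmooth submanifold provides a chart $(U_p,\varphi_p)$ of $M$ with $\varphi_p(p)=0$, $\varphi_p(U_p)$ open in $X$, and $\varphi_p(U_p\cap N)=\varphi_p(U_p)\cap Y_p$ for some closed subspace $Y_p\subset X$. Shrinking if necessary we may assume $B(0,2r_p)\subset\varphi_p(U_p)$ and we set $V_p:=\varphi_p^{-1}(B(0,r_p))$. The composition $g_p:=f\circ\varphi_p^{-1}$ is \Csmooth on the relatively open subset $Y_p\cap\varphi_p(U_p)$ of $Y_p$. Property $(*)$ on $X$ supplies a \Csmooth Lipschitz bump function on $X$, whose restriction to $Y_p$ yields a \Csmooth bump $b_p:Y_p\to[0,1]$ with support in $Y_p\cap B(0,2r_p)$ and equal to $1$ on $Y_p\cap B(0,r_p)$. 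Then $h_p:=b_p\,g_p$, extended by $0$ outside its support, is a \Csmooth function on the whole subspace $Y_p$, so Theorem~\ref{extension} provides a \Csmooth extension $H_p:X\to\Real$. Setting $F_p:=H_p\circ\varphi_p$ on $U_p$ yields a \Csmooth function that agrees with $f$ on $V_p\cap N$.

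To globalize, consider the open cover $\{V_p\}_{p\in N}\cup\{M\setminus N\}$ of $M$. Since $M$ is paracompact and $X$ has property $(*)$, the manifold $M$ admits \Csmooth partitions of unity subordinate to arbitrary open covers; pick one such partition $\{\psi_p\}_{p\in N}\cup\{\psi_0\}$ subordinate to this cover. Define
$$F:=\sum_{p\in N}\psi_p\,F_p,$$
where each term is understood to be extended by $0$ outside $V_p$ (which is legitimate since $\supp(\psi_p)\subset V_p\subset U_p$). The sum is locally finite, hence $F$ is \Csmooth on $M$. Since $\supp(\psi_0)\subset M\setminus N$ one has $\psi_0|_N\equiv 0$, and therefore $\sum_{p\in N}\psi_p\equiv 1$ on $N$; combined with $F_p|_{V_p\cap N}=f|_{V_p\cap N}$ this gives $F|_N=f$.

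The principal obstacle is not the local extension (which is immediate from Theorem~\ref{extension}) but the availability of \Csmooth partitions of unity on $M$ subordinate to an arbitrary open cover. This is the only genuinely manifold-theoretic ingredient, and it is obtained from the \Csmooth partitions of unity on the model space $X$ --- themselves a by-product of property $(*)$ --- by the standard construction transferring partitions of unity from the model space to a paracompact manifold through the atlas.
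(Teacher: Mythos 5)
Your proposal is correct and follows essentially the same route as the paper, which only sketches the argument by deferring to the separable case and recalling that a paracompact $\mathcal{C}^1$ manifold modeled on $X$ admits $\mathcal{C}^1$-smooth partitions of unity whenever $X$ does; your chart-by-chart application of Theorem~\ref{extension} followed by gluing with such a partition of unity is exactly that argument carried out in detail. (Note only that the reindexing of the partition of unity by the cover $\{V_p\}\cup\{M\setminus N\}$ is harmless here because no Lipschitz bound is required, in contrast to the caveat the paper makes for Lipschitz partitions of unity.)
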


A similar result can be stated,
as in the separable case \cite{Azafrykeener},  if $Y$ is a closed convex subset of $X$, $f$ is defined on an open subset $U$ of $X$ such that  $Y\subset U$ and $f:U \rightarrow \mathbb R$   is \Csmooth on $Y$ as a function on $U$, i.e. $f: U \rightarrow \mathbb R$ is differentiable at every point $y\in Y$ and
the mapping $Y \mapsto X^*$, $y \mapsto f'(y)$ is continuous on $Y$.

%


\begin{thm}\label{extensionset}
Let $X$ be a  Banach space with property $(*)$, $Y\subset X$  a closed convex subset, $U\subset X$ an open set containing $Y$ and  $f:U \to \Real$  a \Csmooth function on $Y$ as a function on $U$. Then, there is a \Csmooth extension of $f_{\mid_Y}$ to $X$.

Furthermore, if the given \Csmooth function $f$ is Lipschitz on $Y$, then there is  a \Csmooth and Lipschitz extension $H:X\to
\mathbb{R}$  of $f_{\mid_Y}$ to $X$ such that $\Lip(H)\leq C \Lip(f_{\mid_Y})$, where $C$ is a constant depending only on $X$.
\end{thm}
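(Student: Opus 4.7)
The plan is to adapt the proof of Theorem \ref{extension} to a closed convex subset. Since $Y$ is no longer a linear subspace, the role played there by a linear projection is taken over by a near-metric-projection onto $Y$; otherwise the construction is the same, and the only data about $f$ that enter it are the continuous assignments $y\mapsto f(y)\in\Real$ and $y\mapsto f'(y)\in X^*$ for $y\in Y$.

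First, following the scheme used in Theorem \ref{extension}, I would cover $X\setminus Y$ by balls $B_\alpha=B(x_\alpha,r_\alpha)$ with $r_\alpha$ strictly less than, say, $\tfrac14\dist(x_\alpha,Y)$, and pass to a locally finite refinement via the theorem of M.~E.~Rudin used throughout the paper. Property $(*)$ then produces a subordinate $\mathcal{C}^1$-smooth partition of unity $\{\varphi_\alpha\}$ with $\Lip(\varphi_\alpha)\le C_1/r_\alpha$. For each $\alpha$ I select $y_\alpha\in Y$ with $\|x_\alpha-y_\alpha\|\le(1+\varepsilon_\alpha)\dist(x_\alpha,Y)$, where $\varepsilon_\alpha\downarrow 0$ suitably as $r_\alpha\to 0$.

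Next, set $F(x)=f(x)$ for $x\in Y$ and
\[
F(x)=\sum_\alpha \varphi_\alpha(x)\bigl[f(y_\alpha)+f'(y_\alpha)(x-y_\alpha)\bigr]\quad\text{for }x\in X\setminus Y.
\]
Local finiteness makes $F\in\mathcal{C}^1(X\setminus Y)$ automatic, each summand being a smooth bump times an affine function. For the Lipschitz addendum, the bound $\|f'(y_\alpha)\|\le\Lip(f_{\mid_Y})$, combined with the identities $\sum_\alpha\varphi_\alpha\equiv 1$ and $\sum_\alpha\nabla\varphi_\alpha\equiv 0$, yields $\Lip(F)\le C\,\Lip(f_{\mid_Y})$ with $C$ depending only on $X$.

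The main difficulty is the $\mathcal{C}^1$-regularity of $F$ at points $y_0\in Y$. When $x\in X\setminus Y$ is close to $y_0$, each centre $y_\alpha$ with $\varphi_\alpha(x)\ne 0$ satisfies $\|y_\alpha-y_0\|\le K\|x-y_0\|$ for a geometric constant $K$ coming from the near-projection together with the control $r_\alpha<\tfrac14\dist(x_\alpha,Y)$. Splitting
\[
F(x)-f(y_0)-f'(y_0)(x-y_0)=\sum_\alpha\varphi_\alpha(x)\Bigl[\bigl(f(y_\alpha)-f(y_0)-f'(y_0)(y_\alpha-y_0)\bigr)+\bigl(f'(y_\alpha)-f'(y_0)\bigr)(x-y_\alpha)\Bigr]
\]
and estimating the first parenthesis via differentiability of $f$ at $y_0$ (as a map $U\to\Real$) and the second via the continuity $y\mapsto f'(y)\in X^*$ at $y_0$ yields $F'(y_0)=f'(y_0)$; an analogous expansion of $F'(x)-f'(y_0)$, in which the identity $\sum_\alpha\nabla\varphi_\alpha\equiv 0$ absorbs the constant terms while $\Lip(\varphi_\alpha)\le C_1/r_\alpha$ is balanced against $\|x-y_\alpha\|\lesssim r_\alpha$, gives $\|F'(x)-f'(y_0)\|_{X^*}\to 0$ as $x\to y_0$. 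This is precisely where the hypothesis that $f$ is $\mathcal{C}^1$-smooth on $Y$ \emph{as a function on $U$} is indispensable.
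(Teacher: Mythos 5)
Your plan is a direct Whitney-type extension: cover $X\setminus Y$ by balls whose radii are comparable to their distance to $Y$, glue the first-order Taylor polynomials $f(y_\alpha)+f'(y_\alpha)(\cdot-y_\alpha)$ by a subordinate partition of unity, and verify $\mathcal{C}^1$-regularity at $Y$ by the classical estimates. The whole argument hinges on the bound $\Lip(\varphi_\alpha)\le C_1/r_\alpha$, and that is precisely what is not available in this setting. Property $(*)$ (via Lemma \ref{partition:unity}) only yields partitions of unity $\{\psi_{n,\beta}\}_{n\in\mathbb N,\,\beta}$ indexed over the levels $n$ of M.~E.~Rudin's refinement, with $\Lip(\psi_{n,\beta})\le C_0 2^5(2^n-1)$; for a fixed ball $B_\alpha$ the supports $W_{n,\alpha}$ occur for arbitrarily large $n$, so the Lipschitz constants of the pieces supported inside $B_\alpha$ are unbounded and bear no relation to $r_\alpha$. (The paper explicitly records that one cannot in general produce a Lipschitz partition of unity with the same index set as a given covering; and there is no Besicovitch-type bounded-overlap covering of $X\setminus Y$ in infinite dimensions that would let you normalize bumps $b_\alpha$ with $\Lip(b_\alpha)\sim 1/r_\alpha$ into a partition of unity while preserving that bound.) Without $\Lip(\varphi_\alpha)\lesssim 1/r_\alpha$, the terms $\varphi_\alpha'(x)\bigl[T_\alpha(x)-T_{\alpha'}(x)\bigr]$ in your derivative estimates cannot be controlled, and both the $\mathcal{C}^1$-smoothness at points of $Y$ and the global Lipschitz bound collapse.

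This obstruction is exactly why the paper (following \cite{Azafrykeener}) does not attempt a one-shot Whitney extension. It instead proves an approximation statement (the analogue of Theorem \ref{laclave} for convex $Y$, with $H(f'(y_\beta))$ a norm-preserving extension of $f'(y_\beta)$ and the derivative estimates measured in $Z^*$, $Z=\overline{\spann}(Y)$): given a continuous extension $F$ of $f_{\mid_Y}$, one builds $G=\sum\psi_{n,\beta}\Delta^n_\beta$ where $\Delta^n_\beta=T_\beta-\delta_{n,\beta}$ and the corrections $\delta_{n,\beta}$, produced by Lemma \ref{lemma:approximation} from property $(*)$, satisfy $|T_\beta-F-\delta_{n,\beta}|<\varepsilon/(2^{n+2}L_{n,\beta})$ on all of $X$ while $\|\delta_{n,\beta}'\|$ stays small on $B_\beta\cap Y$. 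That is, the sup-norm error is made small \emph{relative to} the uncontrolled Lipschitz constant $L_{n,\beta}$ of the partition function, rather than the Lipschitz constant being made small relative to the geometry. This yields only an $\varepsilon$-approximation of $f$ and $f'$ on $Y$, and the exact extension is then obtained by iteration: $H=\sum_n G_n$, where $G_n$ approximates a Lipschitz extension of $f-\sum_{i<n}g_i$ to accuracy $\varepsilon/2^n$ and the Lipschitz constants $\Lip(G_n)$ decay geometrically. To repair your argument you would need to reintroduce both ingredients --- the correction terms $\delta_{n,\beta}$ and the final telescoping series --- since neither can be dispensed with here.
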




%

Finally, we can conclude with the following corollary.
\begin{cor}
Let $X$ be a Banach space such that there is a bi-Lipschitz homeomorphism between $X$ and a subset of $c_0(\Gamma)$, for some set $\Gamma$, whose coordinate functions are $\mathcal{C}^1$-smooth. Let $Y\subset X$ be a closed subspace and  $f:Y\to\mathbb{R}$ a \Csmooth function (respectively, \Csmooth and Lipschitz function). Then there is a \Csmooth extension $H$ of $f$ to $X$ (respectively, a \Csmooth and Lipschitz extension $H$ of $f$ to $X$ with
$\Lip(H)\le C \Lip(f)$, where $C$ is a constant depending only on $X$).
\end{cor}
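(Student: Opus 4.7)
The plan is to observe that this corollary is a direct consequence of Theorem~\ref{extension} combined with the sufficient condition for property~$(*)$ established by P.~H\'ajek and M.~Johanis in \cite{Hajek} and recalled in the introduction. First I would note that the hypothesis placed on $X$---the existence of a bi-Lipschitz homeomorphism $\varphi$ embedding $X$ into $c_0(\Gamma)$ whose coordinates $e_\gamma^*\circ\varphi$ are $\mathcal{C}^1$-smooth---is exactly the condition under which \cite{Hajek} guarantees that $X$ satisfies property $(*)$. Thus the constant $C_0$ furnished by property $(*)$ is controlled by the bi-Lipschitz constants of $\varphi$ and the smoothness data of its coordinates, hence depends only on $X$.

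Next I would apply Theorem~\ref{extension} to the triple $(X,Y,f)$. For the first assertion this immediately yields a $\mathcal{C}^1$-smooth extension $H:X\to\mathbb{R}$ of $f$. For the second assertion, when $f$ is also Lipschitz, the ``furthermore'' clause of Theorem~\ref{extension} produces a $\mathcal{C}^1$-smooth and Lipschitz extension $H$ with $\Lip(H)\le C\,\Lip(f)$, where the constant $C$ depends only on $X$ (in fact only through $C_0$, as is clear from the statement of Theorem~\ref{extension}). Combining the two conclusions gives both halves of the corollary simultaneously.

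There is essentially no genuine obstacle at the level of the corollary: both the smoothing step (namely, extracting a Lipschitz $\mathcal{C}^1$-smooth approximation from the smoothness of the coordinates of $\varphi$, carried out in \cite{Hajek}) and the extension step (performing the Azagra--Fry--Keener type construction in the non-separable setting, carried out in Theorem~\ref{extension}) have already been done. The only minor verification required is that the constant $C$ produced by Theorem~\ref{extension} is truly a function of $X$ alone, which reduces to the fact that $C_0$ is, and this in turn follows from the way \cite{Hajek} derives property $(*)$ from the embedding $\varphi$.
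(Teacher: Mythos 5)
Your proposal is correct and is exactly the intended argument: the hypothesis on $X$ gives property $(*)$ by the H\'ajek--Johanis result recalled in the introduction, and Theorem \ref{extension} then yields both the general and the Lipschitz conclusions, with the constant $C$ depending only on $X$ through $C_0$. The paper treats this corollary as an immediate consequence and gives no separate proof, so there is nothing further to compare.
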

\begin{cor}
Let $X$ be one of the following Banach spaces:
\begin{itemize}
\item[(i)] a Banach space such that there is a bi-Lipschitz homeomorphism between $X$ and a subset of $c_0(\Gamma)$, for some set $\Gamma$, whose coordinate functions are $\mathcal{C}^1$-smooth,
\item[(ii)] a Hilbert space.
\end{itemize}
Let $Y\subset X$ be a closed convex subset, $U\subset X$ an open set containing $Y$ and  $f:U \to \Real$ be a \Csmooth function on $Y$ as a function on $U$ (respectively, \Csmooth on $Y$ as a function on $U$ and Lipschitz on $Y$). Then, there is a \Csmooth extension $H$ of $f_{\mid_Y}$ to $X$ (respectively, \Csmooth and Lipschitz extension $H$ of $f_{\mid_Y}$ to $X$ with
$\Lip(H)\le C \Lip(f_{\mid_Y})$, where $C$ is a constant depending only on $X$).
\end{cor}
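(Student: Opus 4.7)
The plan is to deduce this corollary directly from Theorem \ref{extensionset} by checking that both classes of spaces listed in (i) and (ii) satisfy property $(*)$. Once this verification is done, the conclusion (both the plain \Csmooth extension and the Lipschitz-controlled version) follows word-for-word from the theorem applied to the pair $(Y,U)$ and the function $f$.

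For case (i), I would simply invoke the sufficient condition of H\'ajek and Johanis recalled in the introduction: whenever there is a bi-Lipschitz embedding $\varphi:X\hookrightarrow c_0(\Gamma)$ whose coordinate maps $e_\gamma^*\circ\varphi$ are $\mathcal{C}^1$-smooth, then $X$ has property $(*)$ with a constant $C_0=C_0(X)$ depending only on $X$ (see \cite{Hajek}). For case (ii), a Hilbert space $H$ has property $(*)$ with $C_0=1$: this is the Lasry--Lions regularization \cite{LL}, which produces, for every Lipschitz $f:H\to\Real$ and every $\varepsilon>0$, a \Csmooth Lipschitz function $g$ with $\|f-g\|_\infty<\varepsilon$ and $\Lip(g)=\Lip(f)$. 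As already observed in the introduction of the present paper, although Lasry--Lions state the result for bounded Lipschitz functions, the construction via sup- and inf-convolutions extends verbatim to unbounded Lipschitz functions, so $H$ satisfies property $(*)$ in the form we need.

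Having established property $(*)$ in both cases, Theorem \ref{extensionset} applies directly: for the closed convex set $Y\subset X$, the open set $U\supset Y$, and $f:U\to\Real$ which is \Csmooth on $Y$ as a function on $U$, there is a \Csmooth extension $H:X\to\Real$ of $f_{\mid_Y}$. Moreover, if $f$ is Lipschitz on $Y$, the quantitative part of Theorem \ref{extensionset} produces a \Csmooth Lipschitz extension $H$ with $\Lip(H)\le C\,\Lip(f_{\mid_Y})$, where $C$ depends only on the constant $C_0$ associated to $X$ by property $(*)$, and hence only on $X$.

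There is no real obstacle here beyond the verification of property $(*)$, and the only subtle point is to confirm that the Lasry--Lions approximation in case (ii) works without boundedness assumptions and preserves the Lipschitz constant (so that the constant $C$ in the conclusion depends only on $X$, as stated); both facts are already acknowledged in the text preceding Theorem \ref{extension}. Thus the corollary reduces to a citation of Theorem \ref{extensionset}, with the two bullets (i) and (ii) serving as concrete instances where property $(*)$ is known to hold.
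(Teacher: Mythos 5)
Your proposal is correct and matches the paper's intent exactly: the corollary is meant to follow immediately from Theorem \ref{extensionset} once property $(*)$ is verified for the two classes, and the paper establishes both verifications in the introduction (the H\'ajek--Johanis embedding criterion for (i), and the Lasry--Lions regularization, extended to unbounded Lipschitz functions, for (ii)). Nothing further is needed.
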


In Appendix A, we study under what conditions a   real-valued function defined on a closed, non-convex subset $Y$ of a Banach space $X$ with property $(*)$ can be extended to a  \Csmooth function on $X$.

\section{The proofs }
The first result we shall need is the existence of \Csmooth and Lipschitz partitions of unity on Banach spaces satisfying property $(*)$.
Recall that a Banach space $X$ admits  \Csmooth and Lipschitz partitions of unity when for every open cover
$\mathcal{U}=\{U_r\}_{r\in \Omega}$
 of $X$ there is a collection of $\mathcal{C}^1$-smooth, Lipschitz  functions $\{\psi_i\}_{i\in I}$ such that
 (1)   $\psi_i\ge 0$ on $X$ for every $i\in I$, (2) the family $\{\supp (\psi_i)\}_{i\in I}$ is locally finite, where $\supp (\psi_i) =
 \overline{\{x\in X: \psi_i(x)\neq 0\}}$, (3) $\{\psi_i\}_{i\in I}$ is subordinated to $\mathcal{U}=\{U_r\}_{r\in \Omega}$, i.e. for
 each $i\in I$ there is $r\in \Omega$ such that $\supp (\psi_i)\subset  U_{r}$ and (4) $\sum_{i\in I} \psi_i(x)=1$ for every
 $x\in X$. Also let us denote by $\dist(A,B)$ the distance between two sets $A$ and $B$, that is to say
 $\inf \{||a-b||: a\in A, \ b\in B\}$.


The following lemma gives us the tool to generalize the construction of suitable open coverings on
a Banach space, which will be the key to obtain a generalization of the smooth extension result  given in \cite{Azafrykeener}.
\begin{lem}(See M.E. Rudin \cite{Rudin}) \label{Rudin}
Let $E$ be a metric space, $\mathcal{U}=\{U_r\}_{r\in\Omega}$ be an open covering of $E$. Then, there are open refinements $\{V_{n,r}\}_{n\in\mathbb{N},r\in\Omega}$ and $\{W_{n,r}\}_{n\in\mathbb{N},r\in\Omega}$ of $\mathcal{U}$ satisfying  the following properties:
\begin{itemize}
\item[(i)] $V_{n,r}\subset W_{n,r}\subset U_r$ for all $n\in\mathbb{N}$ and $r\in \Omega$,
\item[(ii)] $\dist(V_{n,r},E\setminus W_{n,r})\geq  1/2^{n+1}$ for all $n\in\mathbb{N}$ and $r\in\Omega$,
\item[(iii)] $\dist(W_{n,r},W_{n,r'})\geq 1/2^{n+1}$ for any $n\in\mathbb{N}$ and $r,r'\in\Omega$, $r\not=r'$,
\item[(iv)] for every $x\in E$ there is an open ball $B(x,s_x)$ of $E$ and a natural number $n_x$ such that
     \begin{enumerate}
         \item[(a)] if $i>n_x$, then $B(x,s_x)\cap W_{i,r}=\emptyset$ for any $r\in\Omega$,
         \item[(b)] if $i\leq n_x$, then $B(x,s_x)\cap W_{i,r}\neq\emptyset$ for at most one $r\in\Omega$.
     \end{enumerate}
\end{itemize}
\end{lem}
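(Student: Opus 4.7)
My plan is to follow M.\,E. Rudin's recursive construction, with constants tuned so that the two refinements $V\subset W$ meet the safety margin $1/2^{n+1}$ required in (ii)--(iii). First I will fix a well-ordering of $\Omega$, and for each $x\in E$ set $r(x):=\min\{r\in\Omega:x\in U_r\}$ and $n(x):=\min\{n\in\mathbb N:B(x,3/2^n)\subset U_{r(x)}\}$; the latter is finite because $U_{r(x)}$ is open. I then build the centers $\tilde D_{n,r}\subset E$ by induction on $n$, putting $x\in\tilde D_{n,r}$ precisely when $r(x)=r$, $n(x)=n$, and $x\notin V_{m,r'}$ for every $m<n$ and every $r'\in\Omega$, where $V_{n,r}:=\bigcup_{y\in\tilde D_{n,r}}B(y,1/2^{n+1})$. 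Finally I set $W_{n,r}:=\bigcup_{y\in\tilde D_{n,r}}B(y,1/2^n)$.

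Properties (i)--(iii) will then drop out almost immediately. The chain $B(y,1/2^n)\subset B(y,3/2^n)\subset U_r$ for $y\in\tilde D_{n,r}$ gives (i); the triangle inequality gives $\dist(V_{n,r},E\setminus W_{n,r})\ge 1/2^n-1/2^{n+1}=1/2^{n+1}$ for (ii). For (iii), the key observation is that if $y\in\tilde D_{n,r}$, $y'\in\tilde D_{n,r'}$ and $r<r'$ in the well-order, then $d(y,y')\ge 3/2^n$: otherwise $y'\in B(y,3/2^n)\subset U_r$ would force $r(y')\le r<r'$, contradicting $r(y')=r'$. Two applications of the triangle inequality then yield $\dist(W_{n,r},W_{n,r'})\ge 3/2^n-2/2^n=1/2^n\ge 1/2^{n+1}$. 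That $\{V_{n,r}\}$ (and hence $\{W_{n,r}\}$) covers $E$ follows by inspecting level $n(x)$: either $x$ has survived all prior exclusions and $x\in\tilde D_{n(x),r(x)}\subset V_{n(x),r(x)}$, or $x$ was already absorbed in some $V_{m,r'}$ with $m<n(x)$.

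The delicate step is (iv). Given $x\in E$, the covering argument furnishes indices $(m_0,r_0)$ and a center $z_0\in\tilde D_{m_0,r_0}$ with $x\in B(z_0,1/2^{m_0+1})$; write $\delta:=1/2^{m_0+1}-d(x,z_0)>0$. Choose $n_x\ge m_0$ so large that $1/2^{n_x}<\delta$, and take $s_x:=1/2^{n_x+3}$. For (iv)(a), suppose $i>n_x$ and $y\in\tilde D_{i,r}$ satisfies $B(y,1/2^i)\cap B(x,s_x)\ne\emptyset$; then $d(x,y)<s_x+1/2^i$, whence $d(z_0,y)<(1/2^{m_0+1}-\delta)+(s_x+1/2^i)<1/2^{m_0+1}$, which places $y\in B(z_0,1/2^{m_0+1})\subset V_{m_0,r_0}$ and contradicts $y\in\tilde D_{i,r}$ since $m_0<i$. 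Part (iv)(b) is immediate from (iii): for $i\le n_x$ the separation $\dist(W_{i,r},W_{i,r'})\ge 1/2^{i+1}\ge 1/2^{n_x+1}$ strictly exceeds the diameter $2s_x=1/2^{n_x+2}$, so $B(x,s_x)$ can meet at most one $W_{i,r}$ at each scale $i\le n_x$.

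The main obstacle is precisely (iv)(a), and it is what forces Rudin's recursive absorption clause. Without the condition $x\notin V_{m,r'}$ for $m<n$, one still obtains (i)--(iii), but centers $y$ with $n(y)\to\infty$ can cluster near a fixed point of $E$ (for instance, taking $U_k$ to be a tiny neighborhood of $1/k$ in $\mathbb R$ yields $n(1/k)\to\infty$ while the $1/k$ cluster at $0$). Absorbing such $y$ into a coarser $V_{m_0,r_0}$ is what kills the clustering, and the only remaining bookkeeping is to check that the three radii $3/2^n$, $1/2^n$ and $1/2^{n+1}$ are jointly compatible -- which, by the computation above, they are.
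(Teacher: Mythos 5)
Your construction is precisely M.~E.~Rudin's recursive one, which is all the paper offers for this lemma (it is stated with a bare citation to \cite{Rudin}); your choice of radii $1/2^{n+1}$, $1/2^{n}$, $3/2^{n}$ correctly produces the two nested families $V_{n,r}\subset W_{n,r}$ with the quantitative separations (ii)--(iii), and the absorption argument for (iv)(a) together with the estimate $s_x+1/2^{i}<\delta$ checks out. The proof is correct and takes essentially the same (cited) approach.
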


P. H\'ajek and M. Johanis \cite{Hajek} proved that if a Banach space $X$ satisfies property (*) then $X$ admits
\Csmooth and Lipschitz partitions of unity,  which is, in turn, equivalent to the existence of a $\sigma$-discrete basis
$\mathcal{B}$ of the topology of $X$ such that for every $B\in \mathcal{B}$ there is a $\mathcal{C}^1$-smooth and Lipschitz  function $\psi_B:X\rightarrow \mathbb [0,1]$
with $B=\psi^{-1}(0,\infty)$ (\cite{Hajek}, see also \cite{JTorZiz}). It is worth noting that given an open covering $\{U_r\}_{r\in \Omega}$ of $X$,
it is not always possible to obtain a \Csmooth and Lipschitz partition of unity $\{\psi_r\}_{r\in \Omega}$ with the same
set of indexes such that
$\supp(\psi_r)\subset U_r$. For example, if $A$ is a non-empty, closed subset of $X$, $W$ is an open subset of $X$ such that
$A\subset W$ with $\dist(A,X\setminus W)=0$, and  $\{\psi_1, \psi_2\}$ is a \Csmooth  partition of
unity subordinated to $\{W, X\setminus A\}$, then $\psi_1(A)=1$ and $\psi_1(X\setminus W)=0$ and thus $\psi_1$ is not Lipschitz.
 Nevertheless, in order to prove Theorem \ref{laclave} we only need the following statement.
\begin{lem}\label{partition:unity}
Let $X$ be a Banach space with  property $(*)$.
Then, for every $\{U_r\}_{r\in\Omega}$ open covering of $X$, there is an  open refinement $\{W_{n,r}\}_{n\in\mathbb{N},r\in\Omega}$ of $\{U_r\}_{r\in\Omega}$ satisfying the properties  of  Lemma \ref{Rudin}, and there is a Lipschitz and \Csmooth partition of unity $\{\psi_{n,r}\}_{n\in\mathbb{N}, r\in\Omega}$ such that $\supp (\psi_{n,r})\subset W_{n,r}\subset U_r$  and $\Lip( \psi_{n,r})\le C_0 2^5(2^n-1)$ for every $n\in\mathbb{N}$ and $r\in\Omega$.
\end{lem}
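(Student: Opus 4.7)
The plan is to apply Lemma \ref{Rudin} to the covering $\{U_r\}_{r\in\Omega}$, obtain the refinements $\{V_{n,r}\}$ and $\{W_{n,r}\}$ given there, and then build a $\mathcal{C}^1$-smooth Lipschitz partition of unity from smoothed distance-type bumps attached to the pairs $(V_{n,r},W_{n,r})$. The separation property (ii) guarantees that
\[
d_{n,r}(x):=\min\{1,\, 2^{n+2}\dist(x,X\setminus W_{n,r})\}
\]
is a Lipschitz function with $\Lip(d_{n,r})\le 2^{n+2}$, identically $1$ on $V_{n,r}$ and identically $0$ off $W_{n,r}$. Property $(*)$ then produces a $\mathcal{C}^1$-smooth Lipschitz approximation $\tilde d_{n,r}$ with $|\tilde d_{n,r}-d_{n,r}|<1/4$ and $\Lip(\tilde d_{n,r})\le C_0\,2^{n+2}$. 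Composing with a fixed $\mathcal{C}^1$ step function $\theta:\Real\to[0,1]$ that vanishes on $(-\infty,1/4]$, equals $1$ on $[3/4,\infty)$, and has $\Lip(\theta)\le 4$, I will set $\phi_{n,r}:=\theta\circ\tilde d_{n,r}$, a $\mathcal{C}^1$ Lipschitz function with $\phi_{n,r}\equiv 1$ on $V_{n,r}$, $\supp(\phi_{n,r})\subset W_{n,r}$, and $\Lip(\phi_{n,r})\le C_0\,2^{n+4}$.

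To glue the $\phi_{n,r}$ into a partition of unity I will follow a Moulis-type product construction. Property (iii) of Lemma \ref{Rudin} ensures that for each fixed $n$ the supports $\supp(\phi_{n,r})$, $r\in\Omega$, are pairwise uniformly separated, so
\[
\Phi_n(x):=\sum_{r\in\Omega}\phi_{n,r}(x)
\]
is locally just a single term and hence globally $\mathcal{C}^1$ with $|\nabla\Phi_n|\le C_0\,2^{n+4}$. Then I will define
\[
\psi_{n,r}(x):=\phi_{n,r}(x)\prod_{m<n}\bigl(1-\Phi_m(x)\bigr),
\]
which is a \emph{finite} product of $\mathcal{C}^1$ factors, has support in $W_{n,r}$, and whose family is locally finite by property (iv). That $\sum_{n,r}\psi_{n,r}\equiv 1$ will follow by a telescoping argument: since $\{V_{n,r}\}$ covers $X$, every $x$ lies in some $V_{m_0,r_0}$ with $\Phi_{m_0}(x)=1$, so
\[
\sum_{n,r}\psi_{n,r}(x)=\sum_{n}\Phi_n(x)\prod_{m<n}(1-\Phi_m(x))=1-\prod_{m\ge 1}(1-\Phi_m(x))=1.
\]

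The Lipschitz estimate, which is the heart of the statement, comes from a pointwise gradient bound. Since $\phi_{n,r}$ and each $1-\Phi_m$ lie in $[0,1]$, the product rule gives
\[
|\nabla\psi_{n,r}(x)|\le \Lip(\phi_{n,r})+\sum_{m<n}\Lip(\Phi_m)\le C_0\,2^{n+4}+\sum_{m=1}^{n-1}C_0\,2^{m+4}=C_0\,2^{5}(2^n-1),
\]
and Lipschitz-ness with this constant then follows from the mean value inequality on the convex space $X$. The principal obstacle I anticipate is the careful bookkeeping around the cutoff $\theta$: one must verify that the $1/4$-approximation is strict enough to force $\supp(\phi_{n,r})$ into the \emph{open} set $W_{n,r}$, and that $\Phi_m$ is genuinely $\mathcal{C}^1$ \emph{globally}, both of which rely on the strict separation $\dist(W_{m,r},W_{m,r'})\ge 1/2^{m+1}$ from property (iii) and on the fact that the approximation error is strictly less than the threshold of the cutoff $\theta$.
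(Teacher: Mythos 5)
Your construction is correct and essentially reproduces the paper's proof: both rest on Rudin's refinements, on property $(*)$ applied to distance functions to produce $\mathcal{C}^1$ Lipschitz bumps that are $\equiv 1$ on the $V_{n,r}$ and supported in the $W_{n,r}$, and on the same telescoping product $\phi_{n,r}\prod_{m<n}(1-\Phi_m)$ with the identical Lipschitz bookkeeping $C_0 2^{n+4}+\sum_{m<n}C_0 2^{m+4}=C_0 2^5(2^n-1)$. The only (immaterial) difference is that the paper applies property $(*)$ once per level $n$ to $\dist(\cdot,\,X\setminus\bigcup_{r}W_{n,r})$ and then splits the resulting bump $h_n$ over the separated sets $W_{n,r}$, whereas you approximate one truncated distance function per pair $(n,r)$ and sum; the separation in property (iii) of Lemma \ref{Rudin} makes the two routes equivalent.
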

\begin{proof}



Let us consider  an open covering $\{U_r\}_{r\in\Omega}$ of $X$. By Lemma \ref{Rudin}, there are open refinements
$\{V_{n,r}\}_{n\in\mathbb{N},r\in\Omega}$ and $\{W_{n,r}\}_{n\in\mathbb{N},r\in\Omega}$ of $\{U_r\}_{r\in\Omega}$ satisfying the properties (i)-(iv) of  Lemma \ref{Rudin}. Consider the distance function $D_n(x)=\dist(x, X \setminus \bigcup_{r\in\Omega} W_{n,r})$ which is $1$-Lipschitz. By applying property $(*)$, there is a  $\mathcal{C}^1$-smooth, $C_0$-Lipschitz function
$g_n:X \to \mathbb R$ such that $|g_n(x)-D_n(x)|<\frac{1}{2^{n+3}}$ for every $x\in X$. Thus $g_n(x)>\frac{1}{2^{n+2}}$ whenever
$x\in  \bigcup_{r\in\Omega}V_{n,r}$ and $g_n(x)<\frac{1}{2^{n+3}}$ whenever $x\in X \setminus \bigcup_{r\in\Omega} W_{n,r}$.
By composing $g_n$ with a suitable $\mathcal{C}^\infty$-smooth function $\varphi_n:\mathbb R \rightarrow [0,1]$ with $\Lip(\varphi_n)
\le 2^{n+4}$ we obtain a \Csmooth function $h_n:=\varphi_n(g_n)$ that is zero on an open set including $X \setminus \bigcup_{r\in\Omega} W_{n,r}$,
 ${h_n}_{\mid_{ \bigcup_{r\in\Omega} V_{n,r}}}\equiv 1$ and $\Lip(h_n)\le C_0 2^{n+4}$.
Now, let us define
\begin{equation*}H_1=h_1,  \text{ and } H_n=h_n(1-h_1)\cdots(1-h_{n-1}) \text{ for } n\ge 2.
\end{equation*}
It is clear that $\sum_n H_n(x)=1$ for all $x\in X$. Since $\supp(h_n)\subset \bigcup_{r\in\Omega} W_{n,r}$ and
$\overline{W}_{n,r}\cap \overline{W}_{n,r'}=\emptyset$ for every $n\in \mathbb{N}$ and $r\neq r'$, we can write $h_n=\sum_{r\in\Omega} h_{n,r}$, where $h_{n,r}(x)=h_n(x)$ on $W_{n,r}$ and $\supp (h_{n,r})\subset W_{n,r}$. Notice that $\Lip(h_{n,r})\le \Lip(h_n)\le C_0 2^{n+4}$. Let us define, for every $r\in \Omega$,
\begin{equation*}
\psi_{1,r}=h_{1,r}, \text{ and }  \psi_{n,r}=h_{n,r}(1-h_1)\cdots(1-h_{n-1}) \text{ for each $n\ge 2$.}
 \end{equation*}
 The functions $\{\psi_{n,r}\}_{n\in\mathbb{N},r\in \Omega}$ satisfy that
\begin{itemize}
\item[(i)] they are \Csmooth and Lipschitz, with $\Lip(\psi_{n,r})\le C_0\sum_{i=5}^{n+4}2^{i}= C_02^5(2^n-1)$,
\item[(ii)]  $\supp (\psi_{n,r})\subset \supp (h_{n,r})\subset W_{n,r}$,  and
\item[(iii)]  for every $x\in X$,
\begin{equation*}\sum_{n\in\mathbb{N},r\in\Omega} \psi_{n,r}(x)=\sum_{r\in\Omega} \psi_{1,r}(x)+
\sum_{n\ge 2}\left(\sum_{r\in\Omega}h_{n,r}(x)\right)\prod_{i=1}^{n-1}(1-h_i(x))=\sum_{n\in\mathbb{N}}H_n(x)=1.
\end{equation*}
\end{itemize}
\end{proof}
An alternative proof of  Theorem \ref{laclave} uses the existence of the $\sigma$-discrete basis for  $X$ previously mentioned and the construction, as above, of suitable $\mathcal{C}^1$-smooth and Lipschitz partitions of unity subordinated to any subfamily of this basis.

The following lemma is a necessary modification of property $(*)$ to show the main results.

\begin{lem}\label{lemma:approximation}
Let $X$ be a Banach space with the property $(*)$. Then for every {subset} $Y\subset X$, every continuous function $F:X\to\Real$ such that $F_{\mid_Y}$ is Lipschitz, and every $\varepsilon>0$, there exists a \Csmooth function $G:X\to\Real$ such that
\begin{enumerate}
\item[(i)] $|F(x)-G(x)|\le\varepsilon$ for all $x \in X$, and
\item[(ii)] $\Lip(G_{\mid_Y})\le C_0\Lip(F_{\mid_Y})$. Moreover, $||G'(y)||_{X^*}\le C_0 \Lip(F_{\mid_Y})$ for all $y\in Y$,
where $C_0$ is the constant given by property $(*)$.
\item[(iii)] In addition, if $F$ is Lipschitz on $X$, there exists a constant $C_1\ge C_0$ that depends only on $X$, such that the function $G$ can be chosen to be Lipschitz on $X$ and $\Lip(G)\leq C_1 \Lip(F)$.
\end{enumerate}
\end{lem}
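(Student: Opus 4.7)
The plan is to construct a Lipschitz extension of $F_{\mid_Y}$ to $X$, smooth it using property $(*)$ to obtain a function $K$ that approximates $F$ well near $Y$, and then modify $K$ on $X \setminus Y$ via a smooth partition of unity so as to remain $\varepsilon$-close to $F$ throughout $X$, all while keeping the values and the derivative of the final function equal to those of $K$ on an open neighborhood of $Y$. For part (iii), where $F$ is globally Lipschitz, I would instead blend $K$ with a second smooth approximation of $F$ (obtained by applying property $(*)$ directly to $F$) through a single smooth cutoff.

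First I would set $L := \Lip(F_{\mid_Y})$ and $\tilde F(x) := \inf_{y \in Y}\{F(y) + L \|x - y\|\}$, which is $L$-Lipschitz on $X$ with $\tilde F_{\mid_Y} = F_{\mid_Y}$. Property $(*)$ then provides a \Csmooth, $C_0 L$-Lipschitz $K : X \to \Real$ with $|K - \tilde F| < \varepsilon/4$, so $W := \{x \in X : |K(x) - F(x)| < \varepsilon/2\}$ is an open set containing $Y$. For each $x \in X \setminus Y$ I pick $r_x \in (0, \dist(x, Y)/3)$ so that $F$ has oscillation less than $\varepsilon/2$ on $B(x, r_x)$. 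Then I apply Lemma \ref{partition:unity} to the cover $\{W\} \cup \{B(x, r_x)\}_{x \in X \setminus Y}$, indexed by $\Omega := \{0\} \cup (X \setminus Y)$, to obtain a \Csmooth Lipschitz partition of unity $\{\psi_{n, \alpha}\}_{n \in \mathbb{N}, \alpha \in \Omega}$, and define
\begin{equation*}
\Phi(x) := \sum_{n} \psi_{n, 0}(x), \qquad G(x) := \Phi(x) K(x) + \sum_{\alpha \ne 0,\, n} \psi_{n, \alpha}(x) F(\alpha).
\end{equation*}
Local finiteness makes $G$ \Csmooth. For $y \in Y$, each $\psi_{n, \alpha}$ with $\alpha \ne 0$ has support at distance at least $2 r_\alpha > 0$ from $y$, so by local finiteness there is a ball $B(y, \rho)$ on which every such $\psi_{n, \alpha}$ vanishes identically; hence $\Phi \equiv 1$ and $G \equiv K$ on $B(y, \rho)$, which gives $\|G'(y)\|_{X^*} = \|K'(y)\|_{X^*} \le C_0 L$ and $\Lip(G_{\mid_Y}) \le C_0 L$. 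The approximation bound $|G - F| < \varepsilon$ follows from the identity $G - F = \Phi(K - F) + \sum_{\alpha \ne 0, n} \psi_{n, \alpha}(F(\alpha) - F(\cdot))$ together with $|K - F| < \varepsilon/2$ on $\supp(\Phi) \subset W$ and $|F(\alpha) - F(\cdot)| < \varepsilon/2$ on $\supp(\psi_{n, \alpha}) \subset B(\alpha, r_\alpha)$.

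For part (iii), with $M := \Lip(F)$, I would use property $(*)$ a second time to produce a \Csmooth, $C_0 M$-Lipschitz $K^*$ with $|K^* - F| < \varepsilon/4$, and a third time on $\dist(\cdot, Y)$ to obtain a smooth $d_s$ with $|d_s - \dist(\cdot, Y)| < \eta$ and $\Lip(d_s) \le C_0$. Choosing $r$ and $\eta$ as fixed small multiples of $\varepsilon/(L + M)$ and composing $d_s$ with a one-dimensional \Csmooth cutoff $\varphi : \Real \to [0, 1]$ that is $1$ on $[0, 2\eta]$, vanishes on $[r, \infty)$, and satisfies $\Lip(\varphi) \le 2/r$, the function $\Phi := \varphi(d_s)$ is \Csmooth, equals $1$ on the open set $\{d_s < 2\eta\} \supset Y$ (so $\Phi'(y) = 0$ for $y \in Y$), and vanishes outside the tube $\{\dist(\cdot, Y) < r\}$. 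I then set $G := \Phi K + (1 - \Phi) K^*$; parts (i) and (ii) are verified exactly as before, and from
\begin{equation*}
\|G'\|_{X^*} \le \|\Phi'\|_{X^*} \cdot |K - K^*| + \Phi\,\Lip(K) + (1 - \Phi)\,\Lip(K^*),
\end{equation*}
together with $\|\Phi'\| \le 2 C_0/r$ and the bound $|K - K^*| \le \varepsilon/2 + (L + M)(r + \eta) = O(\varepsilon)$ on $\supp(\Phi')$ (since $|\tilde F - F| \le (L + M)\dist(\cdot, Y)$ on the tube), I obtain $\Lip(G) \le C_1 M$ for some $C_1$ depending only on $X$. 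The main obstacle is the joint calibration of $r$, $\eta$, and the precisions of $K$ and $K^*$: one needs $\Phi$ to plateau on an honest neighborhood of $Y$ (to guarantee $\|G'(y)\|_{X^*} \le C_0 L$), $|G - F| < \varepsilon$ globally, and the crucial product $\|\Phi'\| \cdot |K - K^*|$ to stay bounded by a fixed multiple of $M$ independent of $\varepsilon$.
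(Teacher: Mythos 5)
Your proof is correct and follows essentially the same strategy as the paper's: in both arguments $G$ is a blend $u\cdot(\text{smooth Lipschitz approximation of the extension }\widetilde F)+(1-u)\cdot(\text{global smooth approximation of }F)$ with a cutoff $u\equiv 1$ on a neighborhood of $Y$, and in the Lipschitz case $\Lip(G)$ is controlled through the cross term $\|u'\|\cdot|{\rm difference\ of\ the\ two\ approximations}|$, which stays $O(\Lip(F))$ because $u'$ is supported where that difference is $O(\varepsilon)$ while $\|u'\|=O\bigl((\Lip(F)+\Lip(F_{\mid_Y}))/\varepsilon\bigr)$. The only deviations are implementational: you build the global approximation of the continuous $F$ by hand from Lemma \ref{partition:unity} and locally constant values where the paper simply invokes the Deville--Godefroy--Zizler approximation theorem together with a smooth Urysohn function, and your cutoff is supported in a tube $\{\dist(\cdot,Y)<r\}$ with $r\sim\varepsilon/(\Lip(F)+\Lip(F_{\mid_Y}))$ --- which obliges you to prove the extra (correct) estimate $|\widetilde F-F|\le(\Lip(F)+\Lip(F_{\mid_Y}))\dist(\cdot,Y)$ --- whereas the paper cuts off on the sublevel set $\{|F-\widetilde F|\le\varepsilon/4\}$, whose $\tfrac{\varepsilon}{4(\Lip(F)+\Lip(F_{\mid_Y}))}$-separation from $\{|F-\widetilde F|\ge\varepsilon/2\}$ yields the same bound directly.
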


\begin{proof}
Assume that the function $F:X\to\Real$ is continuous on $X$ and $F_{\mid_Y}$ is Lipschitz. Since $X$ admits \Csmooth partitions of unity,   there is (by  \cite[Theorem VIII 3.2]{Deville}) a \Csmooth function $h:X\to\Real$, such that $|F(x)-h(x)|<\varepsilon$ for all $x\in X$. Let $\widetilde{F}:X\to\Real$ be a Lipschitz extension of $F_{\mid_Y}$ to $X$ with $\Lip(\widetilde{F})=\Lip(F_{\mid_Y})$. Let us apply  property $(*)$ to $\widetilde{F}$ to obtain a $\mathcal{C}^1$-smooth, Lipschitz function $g:X\to \Real$ such that $|\widetilde{F}(x)-g(x)|<{\varepsilon}/{4}$ for every $x\in X$, and $\Lip(g)\le C_0 \Lip(F_{\mid_Y})$.\
Consider the open sets $A=\{x\in X: \, |F(x)-\widetilde{F}(x)|<{\varepsilon}/{4}\}$, $B=\{x\in X: \,  |F(x)-\widetilde{F}(x)|<\varepsilon/2\}$ in $X$ and the closed set
$C=\{x\in X: \, |F(x)-\widetilde{F}(x)|\le {\varepsilon}/{4}\}$ in $X$. Then $Y\subset A\subset C \subset B$.
 By \cite[Proposition VIII 3.7]{Deville} there is a \Csmooth function $u:X\to [0,1]$ such that
\begin{equation*}
u(x)=
\begin{cases}
1 & \text{if $x\in C$},\\
0 & \text{if $x\in X\setminus B.$}
\end{cases}
\end{equation*}

Let us define $G:X\to \Real$ as
\begin{equation*}
G(x):=u(x)g(x)+ (1-u(x))h(x).
\end{equation*}
It is clear that $G$ is a \Csmooth function. Since $u(x)=0$ for all $x\in X\setminus B$, we deduce that
  $|F(x)-G(x)|=|F(x)-h(x)|<\varepsilon$ for all $x\in X\setminus B$.
  Now, if $x\in B$, then $|F(x)-G(x)|\leq u(x)|F(x)-g(x)|+(1-u(x))|F(x)-h(x)|\le u(x)(|F(x)-\widetilde{F}(x)|+|\widetilde{F}(x)-g(x)|)+(1-u(x))|F(x)-h(x)|\le\varepsilon$. Finally, since $u(x)=1$ and $G(x)=g(x)$ for every $x\in A$, we obtain that
  $\Lip(G_{\mid_Y})=\Lip(g_{\mid_Y})\le  C_0 \Lip(F_{\mid_Y})$ and
   $||G'(y)||_{X^*}=||g'(y)||_{X^*}\leq C_0 \Lip(F_{\mid_Y})$ for all $y\in Y$.

Let us now assume that $F$ is Lipschitz on $X$. Let us apply property $(*)$ to $F$ and $\widetilde{F}$ (where $\widetilde{F}$ is a Lipschitz extension of  $F_{\mid_Y}$ to $X$ with $\Lip(\widetilde{F})=\Lip(F_{\mid_Y})$). Thus, we obtain \Csmooth and Lipschitz functions $g,\ h:X\to \Real$ such that
\begin{enumerate}
\item[(a)] $|\widetilde{F}(x)-g(x)|<{\varepsilon}/{4}$, for all $x\in X$,
\item[(b)] $|F(x)-h(x)|<\varepsilon$, for every $x\in X$, and
\item[(c)] $\Lip(g)\le C_0 \Lip(F_{\mid_Y})$ and $\Lip(h)\le C_0 \Lip(F)$.
\end{enumerate}
We  take again the subsets $A,\ B$ and  $C$ as in the previous case. Notice that $\dist(C,X\setminus B)\ge \frac{\varepsilon}{4(\Lip(F)+\Lip(F_{\mid_Y}))}=\varepsilon'$. Let us prove that there is a $\mathcal{C}^1$-smooth, Lipschitz function $u:X\to[0,1]$ such that $u(x)=1$ on $C$ and $u(x)=0$ on $X\setminus B$, with $\Lip(u)\le \frac{ 9 C_0(\Lip(F)+\Lip(F_{\mid_Y}))}{\varepsilon}$. Let us consider $0<r\le{\varepsilon'}/4$, and the distance function $D:X\to \Real$,  $D(x)=\dist(x,C)$. Since the function $D$ is $1$-Lipschitz, we apply property $(*)$ to obtain a $\mathcal{C}^1$-smooth, Lipschitz function $R:X\to\Real$ such that  $\Lip(R)\le C_0$ and $|D(x)-R(x)|<r$ for all $x\in X$.  Also, let us take a \Csmooth and Lipschitz function $\varphi:\Real\to [0,1]$ with (i) $\varphi(t)=1$ whenever $|t|\le {r}$,  (ii) $\varphi(t)=0$ whenever  $|t| \ge \varepsilon' - {r}$ and (iii)  $\Lip(\varphi)\le  \frac{9}{8(\varepsilon'-2r)} \le \frac{9}{4\varepsilon'}$.  Next, we define the \Csmooth function $u:X\to [0,1]$, $u(x)=\varphi(R(x))$. Notice that $\Lip(u)\leq \frac{9 C_0 (\Lip(F)+\Lip(F_{\mid_Y}))}{\varepsilon}$.

Let us now consider $G:X\to \Real$ as
\begin{equation*}
G(x)=u(x)g(x)+(1-u(x))h(x).
\end{equation*}
Clearly $G$ is $\mathcal{C}^1$-smooth on $X$. We follow the above proof to obtain that
$|F(x)-G(x)|<\varepsilon$ on $X$, $\Lip(G_{\mid_Y})=\Lip(g_{\mid_Y})\le  C_0 \Lip(F_{\mid_Y})$ and $||G'(y)||_{X^*}\le C_0 \Lip(F_{\mid_Y})$ for all $y\in Y$. Additionally, if $x\in X\setminus \overline{B}$, then $u(x)=0$, $G(x)=h(x)$, and $||G'(x)||_{X^*}=||h'(x)||_{X^*}\le C_0\Lip(F)$. For $x\in \overline{B}$, we have
   \begin{align*}
 & ||G'(x)||_{X^*}\le&\\
 & \le||g(x)u'(x)+h(x)(1-u)'(x)||_{X^*} +  ||u(x)g'(x)+(1-u(x))h'(x)||_{X^*} \le&\\
&\le ||(g(x)-F(x))u'(x)+ (h(x)-F(x))(1-u)'(x)||_{X^*}+C_0\Lip(F)\le&\\
&\le (|g(x)-\widetilde{F}(x)|+|\widetilde{F}(x)-F(x)|+|h(x)-F(x)|)||u'(x)||_{X^*}+C_0\Lip(F)\le&\\
&  \le \frac{7\varepsilon}{4}\cdot \frac{9 C_0(\Lip(F)+\Lip(F_{\mid_Y}))}{\varepsilon}+C_0\Lip(F)\le
33 C_0 \Lip(F).&
\end{align*}
We define $C_1:=33 C_0$ and obtain that $\Lip(G)\le C_1\Lip(F)$.


\end{proof}


The following approximation result is the key to prove Theorem \ref{extension}. Recall that the separable case was given in \cite[Theorem 1]{Azafrykeener}.

\begin{thm}\label{laclave} Let $X$ be a  Banach space with property $(*)$, and $Y\subset X$ a closed subspace. Let $f:Y\to \mathbb{R}$ be a \Csmooth function, and $F$ a continuous extension of $f$ to $X$.  Then, for every $\varepsilon>0$ there exists a \Csmooth function $G:X\longrightarrow\mathbb{R}$ such that if $g=G_{\mid_Y}$ then
\begin{enumerate}
\item[{(i)}] $|F(x)-G(x)|<\varepsilon$ on $X$, and
\item[{(ii)}] $\|f'(y)-g'(y)\|_{Y^*}<\varepsilon$ on $Y$.
\item[{(iii)}] Furthermore, if $f$ is Lipschitz on $Y$ and $F$ is a Lipschitz extension of $f$ to $X$, then the function $G$ can be chosen to be Lipschitz on $X$ and $\Lip(G)\leq C_2 \Lip(F)$, where $C_2$ is a constant only depending on $X$.
\end{enumerate}
\end{thm}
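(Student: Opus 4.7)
The plan is to mimic, at the level of the non-separable space $X$, the scheme that Azagra, Fry and Keener use in the separable-dual case, with the Rudin-type refinement of Lemma \ref{Rudin} and the \Csmooth, Lipschitz partition of unity of Lemma \ref{partition:unity} replacing their countable subordinate partition, and with Lemma \ref{lemma:approximation} supplying a single global smooth approximation far from $Y$. For each $y\in Y$ I first pick, via Hahn--Banach, a norm-preserving extension $L_y\in X^*$ of $f'(y)\in Y^*$ and form the affine first-order candidate $T_y(x):=f(y)+L_y(x-y)$ on $X$. Using continuity of $F$ on $X$ and of $y\mapsto f'(y)$ on $Y$, I then choose, for each $y$, a radius $r_y>0$ so small that $|F(x)-T_y(x)|<\varepsilon/4$ on $B(y,2r_y)$ and $\|f'(y')-L_y|_Y\|_{Y^*}<\varepsilon/4$ for $y'\in B_Y(y,2r_y)$; $r_y$ will be shrunk further below. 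Independently, Lemma \ref{lemma:approximation} yields a \Csmooth function $h:X\to\Real$ with $|F-h|<\varepsilon/4$ on $X$ (and, under the Lipschitz hypothesis, $\Lip(h)\le C_1\Lip(F)$).

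To assemble the extension, I consider the open cover $\mathcal{U}$ of $X$ consisting of the balls $\{B(y,r_y)\}_{y\in Y}$ together with a decomposition of $X\setminus Y$ by balls of the form $B(x,\tfrac12\dist(x,Y))$, $x\notin Y$, and I feed it through Lemma \ref{partition:unity} to obtain a refinement $\{W_{n,r}\}$ and a \Csmooth, Lipschitz partition of unity $\{\psi_{n,r}\}$ with $\supp\psi_{n,r}\subset W_{n,r}\subset U_r$ and $\Lip(\psi_{n,r})\le C_0 2^5(2^n-1)$. I attach to each pair $(n,r)$ a local model $\phi_{n,r}$: if $U_r=B(y_r,r_{y_r})$ is a $Y$-ball I set $\phi_{n,r}:=T_{y_r}$, and otherwise $\phi_{n,r}:=h$. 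Define
\begin{equation*}
G(x) \ := \ \sum_{n,r}\psi_{n,r}(x)\,\phi_{n,r}(x);
\end{equation*}
the sum is locally finite by Lemma \ref{Rudin}(iv), so $G$ is \Csmooth, and since on $\supp\psi_{n,r}$ one has $|\phi_{n,r}-F|<\varepsilon/2$, assertion (i) follows by convex combination.

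The delicate statement is (ii). For $y\in Y$ every ``outside'' index contributes nothing (the corresponding $W_{n,r}$ misses $Y$, so both $\psi_{n,r}$ and $\psi_{n,r}'$ vanish at $y$), and differentiation yields
\begin{equation*}
G'(y) \ = \ \sum_{n,r}\psi_{n,r}(y)\,L_{y_r} \ + \ \sum_{n,r}\psi_{n,r}'(y)\,\bigl(T_{y_r}(y)-f(y)\bigr),
\end{equation*}
where I have used $\sum_{n,r}\psi_{n,r}'(y)\equiv 0$ to subtract $f(y)$ from the second sum. Restricted to $Y$, the first sum is a convex combination of $L_{y_r}|_Y=f'(y_r)$, each $\varepsilon/4$-close to $f'(y)$ in $Y^*$, hence $\varepsilon/2$-close. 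The \emph{main obstacle} is the second sum, whose coefficients $\Lip(\psi_{n,r})$ grow like $2^n$; Rudin's property (iv) guarantees that only indices with $n\le n_y$ contribute in a neighbourhood of $y$, and by running Lemma \ref{partition:unity} once with arbitrary radii to read off the integers $n_y$, then shrinking each $r_y$ as a function of this preliminary $n_y$ and of the local modulus of continuity of $f'$, one forces $|T_{y_r}(y)-f(y)|$ small enough (of order $2^{-n_y}\varepsilon$) that the second sum stays below $\varepsilon/2$. Finally, for (iii) the same $G$ is Lipschitz: $\|L_{y_r}\|_{X^*}=\|f'(y_r)\|_{Y^*}\le\Lip(f)\le\Lip(F)$ and $\Lip(h)\le C_1\Lip(F)$, and rewriting $G'(x)$ once more with $\sum\psi_{n,r}'(x)\equiv 0$ as a locally finite combination of differences $\phi_{n,r}-\phi_{n',r'}$, which are uniformly of order $\varepsilon$ on overlapping supports, produces the desired bound $\Lip(G)\le C_2\Lip(F)$ for a constant $C_2$ depending only on $X$.
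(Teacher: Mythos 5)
Your overall architecture (Hahn--Banach extensions $T_y$ of the first-order Taylor polynomials, the Rudin refinement, the Lipschitz partition of unity from Lemma \ref{partition:unity}, and a glued sum $G=\sum\psi_{n,r}\phi_{n,r}$) matches the paper's, but the step you yourself flag as the ``main obstacle'' is exactly where the proposal breaks down, and your proposed fix does not work. The troublesome term is $\sum_{n,r}\psi_{n,r}'(y)\bigl(\phi_{n,r}(y)-f(y)\bigr)$, whose coefficients satisfy $\|\psi_{n,r}'(y)\|\le C_02^5(2^n-1)$; to keep this sum below $\varepsilon/2$ you would need $|T_{y_r}(y)-f(y)|\lesssim \varepsilon\,2^{-2n}$ for every index $(n,r)$ with $y\in\supp\psi_{n,r}$. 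Your plan is to run the partition-of-unity lemma once with provisional radii, read off the integers $n_y$, and then shrink the radii $r_y$ accordingly. This is circular: the Rudin refinement $\{W_{n,r}\}$, and hence which integers $n$ occur at a given $y$, are produced \emph{from} the cover $\{B(y,r_y)\}$, so once you shrink the radii you must rebuild the refinement and the relevant $n$'s change (and can grow without bound); the ``preliminary $n_y$'' carries no information about the final construction. Since $f$ is merely $\mathcal{C}^1$, there is no modulus that lets you choose the radii in advance so that $|T_{y_r}(y)-f(y)|$ decays like $2^{-2n}$ for the $n$'s that the \emph{final} refinement will assign.

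The paper resolves this with an idea absent from your proposal: after the partition of unity is fixed, it applies Lemma \ref{lemma:approximation} to $T_\gamma-F$ (relative to the set $B_\gamma\cap Y$) to produce, for each pair $(n,\gamma)$, a correction $\delta_{n,\gamma}$ with $|T_\gamma(x)-F(x)-\delta_{n,\gamma}(x)|<\varepsilon/(2^{n+2}L_{n,\gamma})$ on all of $X$ and $\|\delta_{n,\gamma}'(y)\|\le\varepsilon/8$ on $B_\gamma\cap Y$ (the latter because $\Lip\bigl((T_\gamma-F)|_{B_\gamma\cap Y}\bigr)\le\varepsilon/8C_0$, which is the Lipschitz-type smallness you should extract from \eqref{condi}, rather than the pointwise bound $|F-T_y|<\varepsilon/4$ you use). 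Setting $\Delta^n_\gamma=T_\gamma-\delta_{n,\gamma}$ makes $|\Delta^n_\gamma-F|$ decay precisely fast enough in $n$ to cancel the growth of $\Lip(\psi_{n,\gamma})$, while the derivative condition on $Y$ keeps assertion (ii) intact. The same device (with $n$-dependent approximants $F_0^n$ of $F$ on $B_0=X\setminus Y$) is what makes the Lipschitz estimate in (iii) go through; your sketch of (iii) runs into the identical divergence $\sum_n 2^n\cdot O(\varepsilon)$. Without these $n$-dependent corrections the argument has a genuine gap.
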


\begin{proof} We follow the steps given in the proof of the separable case with the necessary modifications.
Notice that by the Tietze theorem, a continuous extension $F$ of $f$ always exists.
Since $X$ is a Banach space, $Y\subset X$ is a closed subspace and $f'$ is a continuous function on $Y$, there exists $\{B(y_\gamma,r_\gamma)\}_{\gamma\in\Gamma}$  a covering of $Y$ by open balls of $X$,    with centers $y_\gamma \in Y$,  $0\not\in \Gamma$,   such that
\begin{equation}  \label{condi}
\|f'(y_\gamma)-f'(y)\|_{Y^*}<\frac{\varepsilon}{8C_0}
\end{equation}
on   $B(y_{\gamma},r_\gamma)\cap Y$, where $C_0$ is the positive constant given by
   property $(*)$ (which depends only on $X$). We denote $B_{\gamma}:=B(y_\gamma,r_\gamma)$.

Let us define $T_\gamma$ an  extension of the first order Taylor Polynomial
   of $f$ at $y_\gamma$ given by
$T_\gamma(x)=f(y_\gamma)+H(f'(y_\gamma))(x-y_\gamma)$, for $x\in X$, where $H(f'(y_\gamma))\in X^*$ denotes a
Hahn-Banach extension of $f'(y_\gamma)$ with the same norm, i.e. $||H(f'(y_\gamma))||_{X^*}=||f'(y_\gamma)||_{Y^*}$.
Notice that  $T_\gamma$ satisfies the following properties:
    \begin{enumerate}
    \item[(B.1)] $T_\gamma$ is $\mathcal{C}^\infty$-smooth on $X$,
    \item[(B.2)] $T'_\gamma(x)=H(f'(y_\gamma))$ for all $x\in X$, \,   $T'_\gamma(y)\mid_Y=f'(y_\gamma)$ for every $y\in Y$, and
    \item[(B.3)] from (B.2), \eqref{condi} and the fact that  $B_{\gamma}\cap Y$ is convex, we deduce  $\Lip((T_\gamma-F){\mid_{B_{\gamma}\cap Y}})\le\frac{\varepsilon}{8C_0}.$
    \end{enumerate}

 Since $F:X\to\Real$ is a continuous function, and $X$ admits \Csmooth partitions of unity, there is a \Csmooth function $F_0:X\to\Real$ such that $|F(x)-F_0(x)|<\frac{\varepsilon}{2}$ for every $x\in X$.

Let us denote $B_0:=X\setminus Y$, $\Sigma:=\Gamma\cup \{0\}$, and  $\mathcal{C}:=\{B_{\beta} : \beta\in \Sigma\}$, which is a covering of $X$.  By  Lemma \ref{partition:unity}, there is an open refinement  $\{W_{n,\beta}\}_{n\in\mathbb{N},\beta\in\Sigma}$ of $\mathcal{C}=\{B_\beta : \beta\in\Sigma\}$ satisfying the four properties  of  Lemma \ref{Rudin}. In particular, $W_{n,\beta}\subset B_{\beta}$ and for each $x\in X$ there is an open ball $B(x,s_x)$ of $X$ with center $x$ and radius $s_x>0$, and a natural number $n_x$ such that
     \begin{enumerate}
         \item if $i>n_x$, then $B(x,s_x)\cap W_{i,\beta}=\emptyset$ for every $\beta\in\Sigma$,
         \item if $i\leq n_x$, then $B(x,s_x)\cap W_{i,\beta}\neq\emptyset$ for at most one $\beta\in \Sigma$.
     \end{enumerate}
There is, by Lemma \ref{partition:unity}, a $\mathcal{C}^1$-smooth and Lipschitz partition of unity $\{\psi_{n,\beta}\}_{n\in\mathbb{N},\beta\in\Sigma}$ such that $\supp (\psi_{n,\beta}) \subset
W_{n,\beta}\subset B_\beta$ and $\Lip(\psi_{n,\beta})\le C_02^5(2^n-1)$ for every $(n,\beta)\in \mathbb N \times \Sigma$.


Let us define $L_{n,\beta}:=\max\{\Lip(\psi_{n,\beta}),1\}$ for every $n\in\mathbb{N}$ and $\beta\in\Sigma$.
Now, for every $n\in\mathbb{N}$ and  $\gamma\in\Gamma$, we apply Lemma \ref{lemma:approximation} to $T_\gamma-F$ on $B_{\gamma}\cap Y$ to obtain a \Csmooth   map $\delta_{n,\gamma}:X\longrightarrow \mathbb{R}$ so that
\begin{equation} \tag{C.1}
|T_\gamma(x)-F(x)-\delta_{n,\gamma}(x)|<\frac{\varepsilon}{2^{n+2}L_{n,\gamma}} \ \text{ for every } x\in X
\end{equation}
and
\begin{equation} \tag{C.2}
 \|\delta'_{n,\gamma}(y)\|_{X^*}\le \frac{\varepsilon}{8} \ \text{ for every } y\in B_\gamma\cap Y.
\end{equation}
From inequality  \eqref{condi}, (B.2) and (C.2),  we have  for all $y\in B_{\gamma}\cap Y$,
\begin{equation*}
\|T'_\gamma(y)-f'(y)-\delta'_{n,\gamma}(y)\|_{Y^*} \leq \|T_\gamma'(y)-f'(y)\|_{Y^*}+\|\delta_{n,\gamma}'(y)\|_{Y^*} < \frac{\varepsilon}{4}.
\end{equation*}
Notice that in the above inequality we consider the norm on $Y^*$ (i.e. the norm of the  functional restricted to $Y$).
Let us define
\begin{equation}\label{deltanbeta}
\Delta_\beta^n(x)=
\begin{cases}
F_0(x) & \text{ if } \beta=0, \\
T_\beta(x)-\delta_{n,\beta}(x)  & \text{ if } \beta\in \Gamma.
\end{cases}
\end{equation}
Thus,
$|\Delta_\beta^n(x)-F(x)|<\frac{\varepsilon}{2}$ whenever $n\in \mathbb{N}$,  $\beta\in\Sigma$ and  $x\in B_\beta$.
We now define
\begin{equation*}
G(x)=\sum_{(n,\beta)\in\mathbb{N} \times \Sigma} \psi_{n,\beta}(x)\Delta^n_\beta(x).
\end{equation*}
Since $\{\psi_{n,\beta}\}_{n\in\mathbb{N},\beta\in\Sigma}$ is locally finitely nonzero, then $G$ is $\mathcal{C}^1$-smooth.
Now, if $x\in X$ and $\psi_{n,\beta}(x)\not=0$, then $x\in B_\beta$ and thus
\begin{equation*}
|G(x)-F(x)|\leq \sum_{(n,\beta)\in\mathbb{N} \times \Sigma}\psi_{n,\beta}(x)|\Delta_\beta^n(x)-F(x)|\leq \sum_{(n,\beta)\in\mathbb{N} \times \Sigma} \psi_{n,\beta}(x)\frac{\varepsilon}{2}<\varepsilon.
\end{equation*}

Let us now estimate the distance between the derivatives. From the definitions given above, notice that:
\begin{enumerate}
\item[(D.1)] Since $\sum_{\mathbb N\times \Sigma}\psi_{n,\beta}(x)= 1$  for all $x\in X$, we have that  $\sum_{\mathbb N\times \Sigma}\psi'_{n,\beta}(x)=0$ for all $x\in X$.
\item[(D.2)] Thus, we can write $f'(y)=\sum_{\mathbb N\times \Sigma} (\psi'_{n,\beta}(y))_{\mid_Y} f(y)+\sum_{\mathbb N\times \Sigma}\psi_{n,\beta}(y)f'(y)$, for every $y\in Y$.
\item[(D.3)]   $\supp (\psi_{n,0})\subset B_{0}= X\setminus Y$, for all $n$.
\item[(D.4)] $G'(x)=\sum_{\mathbb N\times \Sigma}\psi'_{n,\beta}(x)\Delta_\beta^n(x)+\sum_{\mathbb N\times \Sigma}\psi_{n,\beta}(x)(\Delta_{\beta}^n)'(x)$, for all $x\in X$.
\item[(D.5)] If $g=G_{\mid_Y}$ then $g'(y)=\sum_{\mathbb N\times \Sigma}\psi'_{n,\beta}(y)_{\mid_Y}\Delta_\beta^n(y)+\sum_{\mathbb N\times \Sigma}\psi_{n,\beta}(y)(\Delta_{\beta}^n)'(y)_{\mid_Y} $, for every $y\in Y$.
\item[(D.6)] Properties (1) and (2) of the open refinement $\{W_{n,\beta}\}$ imply that for every $x\in X$  and $n\in\mathbb{N}$,
there is at most one $\beta\in\Sigma$, which we shall denote by $\beta_x(n)$,  such that $x\in \supp  (\psi_{n,\beta})$.
In the case that $y\in Y$, then $\beta_y(n)\in\Gamma$.  We define $F_x:=\{(n,\beta)\in \mathbb N\times \Sigma: x\in \supp  (\psi_{n,\beta})\}$. In particular, $F_y\subset \mathbb N\times \Gamma$, whenever $y\in Y$.
\end{enumerate}
We obtain, for $y\in Y,$
\begin{align} 
 &\|g'(y)-f'(y)\|_{Y^*}\leq &   \label{f'-g'} \\  \notag
 &\leq \sum_{(n,\beta)\in F_y}\|\psi'_{n,\beta}(y)\|_{Y^*} |T_\beta(y)-f(y)-\delta_{n,\beta}(y)| &\\ \notag
&  \qquad \qquad \qquad \qquad \qquad  \qquad +\sum_{(n,\beta)\in F_y}\psi_{n,\beta}(y) \|T'_\beta(y)-f'(y)-\delta'_{n,\beta}(y)\|_{Y^*}\le &\\ \notag
& \le \sum_{\{n: (n,\beta_y(n))\in F_y\}}L_{n,\beta_y(n)}| T_{\beta_y(n)}(y)-f(y)-\delta_{n,\beta_y(n)}(y)|+ &\\  \notag
 & \qquad \qquad  \quad   +\sum_{\{n: (n,\beta_y(n))\in F_y\}}\psi_{n,\beta_y(n)}(y)\|T'_{\beta_y(n)}(y)-f'(y)-\delta'_{n,\beta_y(n)}(y)\|_{Y^*}\leq &\\  \notag
&\le \sum_{\{n: (n,\beta_y(n))\in F_y\}}(L_{n,\beta_y(n)}\,\frac{\varepsilon}{2^{n+2}L_{n,\beta_y(n)}}+
\psi_{n,\beta_y(n)}(y)\frac{\varepsilon}{4})\leq \frac{\varepsilon}{4}+\frac{\varepsilon}{4}<\varepsilon,&
\end{align} where  all the functionals involved are considered restricted to $Y$.
\smallskip

Let us now consider the case when $f$ is \Csmooth and Lipschitz on $Y$ and $F$ is a Lipschitz extension of $f$ on $X$.
 In this case, we can assume that $f$ is not constant (otherwise the assertion is trivial) and thus $\Lip(F)\ge\Lip (f)>0$.
Let us fix $0<\varepsilon < \Lip(F)$.
If we follow the above construction  for the open covering $\{B_{\beta}\}_{\beta\in\Sigma}$ of $X$
satisfying the conditions (\ref{condi}), (B.1), (B.2) and (B.3), we additionally obtain
\begin{enumerate}
\item[(B.4)] $T_\beta-F$ is Lipschitz on $X$ and  $\Lip(T_\beta-F)\le \Lip(f)+\Lip(F)$ for every $\beta \in \Gamma$.
\end{enumerate}
Also, the construction of the open refinement
$\{W_{n,\beta}\}_{n\in \mathbb N, \beta\in \Sigma}$ of $\mathcal{C}=\{B_{\beta}\}_{\beta\in\Sigma}$,
the Lipschitz partition of unity $\{\psi_{n,\beta}\}_{n\in\mathbb{N},\beta\in\Sigma}$ and the definition of $L_{n,\beta}$
are similar to the previous case (recall that $\Sigma=\Gamma\cup\{0\}$ and $B_0=X\setminus Y$).

Now, for any $n\in\mathbb{N}$ and  $\beta\in\Gamma$, we apply  Lemma \ref{lemma:approximation} to $T_\beta-F$ on $B_{\beta}\cap Y$ to obtain a \Csmooth   map $\delta_{n,\beta}:X\longrightarrow \mathbb{R}$ satisfying conditions (C.1) , (C.2)  and
\begin{equation} \tag{C.3}
 \Lip(\delta_{n,\beta})\le C_1
\Lip(T_\beta-F)\le  C_1 (\Lip(f)+\Lip(F)).
\end{equation}
Besides, for all $n\in \mathbb N$ and  $\beta=0$, by applying property $(*)$, we select a $\mathcal{C}^1$-smooth
 function $F_0^n:X \rightarrow \mathbb R$,
 such that
\begin{equation*} 
|F_0^n(x)-F(x)|<\frac{\varepsilon}{2^{n+2}L_{n,0}} \text{ for every  } x\in  X \, \text{  and }
\Lip(F_0^n)\leq C_0 \Lip(F).
\end{equation*}
Thus, if we  define $\Delta_\beta^n:X \rightarrow \mathbb R$
\begin{equation}\label{deltanbetalip}
\Delta_\beta^n(x)=
\begin{cases} F_0^n(x) & \text{ if } \beta=0,\\
T_\beta(x)-\delta_{n,\beta}(x) & \text{ if } \beta\in\Gamma,
\end{cases}
\end{equation}
we obtain for every $\beta \in \Sigma$,
\begin{equation*}
|\Delta_\beta^n(x)-F(x)|<\frac{\varepsilon}{2^{n+2}L_{n,\beta}} \  \text{ whenever } x\in X\end{equation*}
 and $\Lip(\Delta_\beta^n)\leq \max\{(1+C_1) \Lip(f) +C_1\Lip(F),C_0 \Lip(F)\}\le R \Lip(F)$
 where $R:=1+2C_1$ is a constant depending only on $X$.
Similarly to the first case, the definition of $G$ is \begin{equation*}
G(x)=\sum_{(n,\beta)\in\mathbb{N} \times \Sigma} \psi_{n,\beta}(x)\Delta^n_\beta(x).
\end{equation*}
The proofs that $G$ is $\mathcal{C}^1$-smooth,
$|G(x)-F(x)|<\varepsilon $ for all $x\in X$  and $||g'(y)-f'(y)||_{Y^*}<\varepsilon$ for all $y \in Y$ (recall that  $g=G_{\mid_Y}$)  follow along the same lines.
In addition, let us check that $G$ is Lipschitz. From properties (D.1) to (D.6), in particular from the fact that  $\sum_{(n,\beta)\in F_x}\psi'_{n,\beta}(x)=0$, we deduce that
\begin{align}\label{cotaparaG'}
||G'(x)||_{X^*} &\le \sum_{(n,\beta)\in F_x}||\psi'_{n,\beta}(x)||_{X^*} |\Delta_\beta^n(x)-F(x)|+
\sum_{(n,\beta)\in F_x}\psi_{n,\beta}(x)||(\Delta_\beta^n)'(x)||_{X^*}\le \\  \notag
& \le  \sum_{\{n:(n,\beta(n))\in F_x\}}L_{n, \beta(n)}\, \frac{\varepsilon}{2^{n+2}L_{n, \beta(n)}}+
\sum_{\{n:(n,\beta(n))\in F_x\}}\psi_{n,\beta(n)}(x)R \Lip(F) \le \\  \notag
 &\le \frac{\varepsilon}{4}+R\Lip(F).
\end{align}
Since $\varepsilon < \Lip(F)$, then $\Lip(G)\leq C_2 \Lip(F)$ where $C_2:=R+\frac{1}{4}$ and this finishes the proof.
\end{proof}

The above theorem provides the tool to prove the  extension Theorem \ref{extension}, which states that every \Csmooth function (\Csmooth and Lipschitz function)  defined on a closed subspace has a \Csmooth extension (\Csmooth and Lipschitz extension, respectively) to $X$.

\noindent \emph{Proof of Theorem \ref{extension}. }
For every  bounded, Lipschitz function, $h:Y \to \Real$, we define
$\overline{h}(x):=  \min\{||h||_\infty, \inf_{y\in Y} \{h(y)+\Lip(h)||x-y||\}\}$ for any $x\in X$.
The function $\overline{h}$ is a Lipschitz extension of $h$ to $X$ with $\Lip(\overline{h})=\Lip(h)$ and
$||\overline{h}||_\infty=||h||_\infty=\sup\{|h(y)|:y\in Y\}$.

Let us assume that  the
function $f:Y\to \Real$ is  \Csmooth and consider $F:X\to\Real$  a continuous extension of $f$ to $X$ and $\varepsilon>0$.
We apply  Theorem \ref{laclave} to deduce the existence of a  \Csmooth function $G_1:X\to \Real$ such that if 
 $g_1:={G_1}_{\mid_Y}$, we have
\begin{enumerate}
\item[(i)]  $\displaystyle{|F(x)-G_1(x)|<\varepsilon/2}$ for $x\in X$, and
\item[(ii)] $\displaystyle{||f'(y)-g_1'(y)||_{Y^*}<\varepsilon/2C_2}$ for $y\in Y$.  Since $Y$ is convex, then we have $\Lip(f-g_1)\le \varepsilon/2C_2$.
\end{enumerate}
 The function $f-g_1$ is bounded by $\varepsilon/2$ and $\frac{\varepsilon}{2C_2}$-Lipschitz on $Y$.
 Thus, there exists a bounded, Lipschitz extension to $X$, $\overline{f-g_1}$, satisfying  $|\overline{(f-g_1)}(x)|\le\varepsilon/2$ on $X$ and $\Lip(\overline{f-g_1})\leq \varepsilon/2C_2$.
Following the construction given for the separable case \cite{Azafrykeener}, we apply  Theorem \ref{laclave} (Lipschitz case) to $\overline{f-g_1}$ to obtain  a \Csmooth function $G_2:X\to\Real$ such that if
 $g_2:={G_2}_{\mid_Y}$, we have
\begin{enumerate}
\item[(i)] $\displaystyle{|\overline{(f-g_1)}(x)-G_2(x)|<\varepsilon/2^2}$ for $x\in X$,
\item[(ii)]  $\displaystyle{||f'(y)-(g_1'(y)+g_2'(y))||_{Y^*}<\varepsilon/2^2C_2}$ for $y\in Y$, and
\item[(iii)]  $\displaystyle{\Lip(G_2)\le C_2 \Lip(f-g_1)\le \varepsilon/2}$.
\end{enumerate}
Thus, we find, by induction,  a sequence  $\{G_n\}_{n=1}^\infty$  of \Csmooth function such that for $n\ge 2$, the functions $G_n:X\to \Real$ and their restrictions $g_n:={G_n}_{\mid_Y}$ satisfy:
\begin{itemize}
\item[(i)]  $\displaystyle{|\overline{(f-\sum_{i=1}^{n-1} g_i)}(x)-G_n(x)|<\varepsilon/2^n}$ for $x\in X$,
\item[(ii)] $\displaystyle{||f'(y)-\sum_{i=1}^n g_i'(y)||_{Y^*} <\varepsilon/2^nC_2 }$ for $y\in Y$, and
\item[(iii)] $\displaystyle{\Lip(G_n)\leq C_2\Lip (f-\sum_{i=1}^{n-1} g_i)}\le \varepsilon/2^{n-1}$.
\end{itemize}
Let us define the function $H:X\to \Real$ as $H(x):=\sum_{n=1}^\infty G_n(x)$.  Since
$|G_n(x)|\le \varepsilon/2^{n-2}$ and $||G_n'(x)||_{X^*}\leq \Lip(G_n)\le \varepsilon/2^{n-1}$ for all $x\in X$ and $n\ge 2$,   the series
$\sum_{n=1}^\infty G_n$ and $\sum_{n=1}^\infty G_n'$ are absolutely and uniformly convergent on $X$. Hence, the function $H$ is \Csmooth on $X$.
It follows  from (i) that $|f(y)-\sum_{i=1}^n G_i(y)|<\varepsilon/2^n$ for every $y\in Y$ and $n\geq 1$.
Thus $H(y)=f(y)$  for all $y\in Y$.

Let us now consider $f:Y\to\Real$, \Csmooth and Lipschitz on $Y$. Let $F:X\to\Real$ be a Lipschitz extension of $f$ with $\Lip(F)=\Lip(f)$. We may assume $\Lip(f)>0$ (otherwise the result trivially holds) and take  $0<\varepsilon<\Lip(f)$.
Let us apply Theorem \ref{laclave} (Lipschitz case) to obtain a \Csmooth function $G_1:X\to \Real$ such that
if
 $g_1:={G_1}_{\mid_Y}$, we have
\begin{itemize}
\item[(i)]  $\displaystyle{|F(x)-G_1(x)|<\varepsilon/2}$ for $x\in X$,
\item[(ii)] $\displaystyle{||f'(y)-g_1'(y)||_{Y^*}<\varepsilon/2C_2}$ for $y\in Y$, and
\item[(iii)] $\displaystyle{\Lip(G_1)\le C_2\Lip(f)}$.
\end{itemize}
Let us define $G_n:X\to\Real$ for $n\ge 2$ as in the general case and $H(x):=\sum_{n=1}^\infty G_n(x)$. Then,  $H$ is $\mathcal{C}^1$-smooth, $H_{\mid_Y}=f$ and
\begin{equation*}
\Lip(H)\le \Lip(G_1)+\sum_{n=2}^\infty \Lip(G_n)\le C_2 \Lip(f) + \sum_{n=2}^\infty \frac{\varepsilon}{2^{n-1}}\leq (C_2+1)\Lip(f).
\end{equation*}
\qed


The proof of Corollary \ref{manifold} is similar to the separable case \cite{Azafrykeener}. (Recall that a paracompact $\mathcal{C}^1$ manifold $M$ modeled on a Banach space admits  $\mathcal{C}^1$-smooth partitions of unity whenever the Banach space where it is modeled  does.)


An analogous result to Theorem  \ref{laclave} can be stated for a smooth function defined on a closed, convex subset $Y$ of  $X$
with the required conditions  given  in Theorem \ref{extensionset}. Let us sketch  the required modifications
of the proof:  first, in the non-Lipschitz case,  we take  $H(f'(y_\beta))=f'(y_\beta)$ and we evaluate the norms of the functionals
in $X^*$ rather than in $Y^*$. Secondly, in the Lipschitz case, there is no loss of generality in assuming that $0\in Y$. Then, it can be easily checked that
$||f'(y)_{\mid_{Z}}||_{Z^*}\le \Lip(f)$ for all $y\in Y$, where we define  $Z:=\overline{\text{span}} (Y)$.
Next, we select  a continuous linear extension  of $f'(y_\beta)_{\mid_{Z}}$ to $X$ with the same norm and denote it by $H(f'(y_\beta))$  for every $\beta \in \Gamma$ (i.e. $||H(f'(y_\beta))||_{X^*}\le \Lip(f)$).  In this case,  assertion (ii) in Theorem  \ref{laclave} reads as
follows:  $||f'(y)-g'(y)||_{Z^*}<\varepsilon$ for every $y \in Y$. Finally, the proof of Theorem \ref{extensionset} is similar  to the proof of Theorem \ref{extension}.


\section{Appendix. ${C}^1$-smooth extensions of certain ${C}^1$-smooth functions defined on closed subsets of a Banach space.}

Let us finish this note by studying the case of the \Csmooth extension of a real-valued function $f$ defined on a (possibly non-convex) closed subset $Y$ of a Banach space $X$ with property $(*)$.
Unfortunately, we can find  examples of a real-valued function
$f$ defined on an open neighborhood $U$ of a   (non-convex) closed subset $Y$ of  $\mathbb R$
such that $f:U\rightarrow \mathbb R$   is differentiable at every point $y\in Y$, the mapping $Y \mapsto \mathbb R$, $y \mapsto f'(y)$ is continuous on $Y$  and $f_{\mid_Y}$ does not admit any \Csmooth extension to $\mathbb R$. Thus, in general, we cannot obtain a similar statement to Theorem \ref{extensionset} for a non-convex  closed subset $Y$ of $X$.
It is worth pointing out that, by an application of the Mean Value Theorem, if  $f:Y\rightarrow \mathbb R$ admits a \Csmooth  extension $H:X\rightarrow \mathbb R$, then, setting  $D(y):=H'(y)\in X^*$  for $y\in Y$, we have the following mean value condition:
\begin{align}
\tag{E}
\text{ for all } y\in Y, \text{ for all } \varepsilon >0, \text{ there exists } r>0 \text{ such that} \qquad \quad \\
|f(z)-f(w)-D(y)(z-w)|\le \varepsilon||z-w||, \quad \text{  for all } z,w\in Y\cap B(y,r). \notag \end{align}

\begin{defn} If $Y$ is a closed subset of a Banach space $X$, let  $Z:=\overline{\text{\emph{span}}} \{y-y': y, y'\in Y\}$. Let  $f:Y\rightarrow \mathbb R$ and let  $D:Y\rightarrow Z^*$ a continuous mapping.
We  will then  say
that $f:Y\rightarrow \mathbb R$ satisfies condition $(E)$ for $D$ if the statement $(E)$ is true.
\end{defn}
Thus, condition (E) is necessary to  extend  $f:Y\rightarrow \mathbb R$ to a
 \Csmooth function $H:X\rightarrow \mathbb R$.
Moreover, the next result states that condition (E) is also sufficient  in spaces with property $(*)$.
\begin{thm}\label{extensionencerrados} Let $X$ be a Banach space with property $(*)$, $Y\subset X$ a closed subset and $f:Y\rightarrow \mathbb R$ a function on $Y$. Then,
 $f$ satisfies condition
$(E)$ if and only if there is a \Csmooth
extension $H$ of $f$ to $X$.

Furthermore, assume that the given  function $f$ is Lipschitz on $Y$. Then,
 $f$ satisfies condition $(E)$  for some continuous function $D:Y\to Z^*$ with
  $\sup\{||D(y)||_{Z^*}:y\in Y\}<\infty$ if and only if there is a \Csmooth and Lipschitz extension
$H$ of $f$ to $X$. In this  case, if $f$ satisfies condition $(E)$  for $D$ with  $M:=\sup\{||D(y)||_{Z^*}:y\in Y\}<\infty$,
then we can obtain $H$ with $\Lip(H)\le (1+C_1)(M+\Lip(f))$, where $C_1$ is the constant defined in Lemma \ref{lemma:approximation}. (Recall that $C_1$ depends only on $X$.)
\end{thm}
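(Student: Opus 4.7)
I will prove Theorem~\ref{extensionencerrados} by adapting the arguments of Theorems~\ref{laclave} and~\ref{extension}, using condition $(E)$ in place of the continuity of $f'$ on $Y$ (and, where convexity was invoked, applying $(E)$ directly).

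\emph{Necessity.} If $H\colon X\to\mathbb{R}$ is a $\mathcal{C}^1$-smooth extension of $f$, set $D(y):=H'(y)|_Z\in Z^*$. Continuity of $H'\colon X\to X^*$ together with continuity of the restriction $X^*\to Z^*$ shows that $D$ is continuous on $Y$. Given $y\in Y$ and $\varepsilon>0$, by continuity of $H'$ at $y$ there is $r>0$ with $\|H'(u)-H'(y)\|_{X^*}<\varepsilon$ on $B(y,r)$; the mean value inequality along the segment $[w,z]\subset B(y,r)$ joining any $z,w\in Y\cap B(y,r)$ then yields $|f(z)-f(w)-D(y)(z-w)|\le\varepsilon\|z-w\|$, since $z-w\in Z$ and $H|_Y=f$. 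For the Lipschitz version, $\|D(y)\|_{Z^*}\le\|H'(y)\|_{X^*}\le\Lip(H)$ forces $M\le\Lip(H)<\infty$.

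\emph{Sufficiency --- base construction.} I mirror the proof of Theorem~\ref{laclave}, replacing $f'(y_\gamma)$ by $D(y_\gamma)\in Z^*$ and Hahn--Banach-extending to $H(D(y_\gamma))\in X^*$ with $\|H(D(y_\gamma))\|_{X^*}=\|D(y_\gamma)\|_{Z^*}$. The open cover of $Y$ that played the role of~\eqref{condi} is now built from condition $(E)$: for each $y\in Y$ pick $r_y>0$ with $|f(z)-f(w)-D(y)(z-w)|\le\frac{\varepsilon}{8C_0}\|z-w\|$ on $Y\cap B(y,r_y)$, extract $\{B_\gamma=B(y_\gamma,r_{y_\gamma})\}_{\gamma\in\Gamma}$, and adjoin $B_0:=X\setminus Y$. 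Writing $T_\gamma(x):=f(y_\gamma)+H(D(y_\gamma))(x-y_\gamma)$, the cover property together with $F|_Y=f$ directly yields $\Lip\bigl((T_\gamma-F)|_{Y\cap B_\gamma}\bigr)\le\varepsilon/(8C_0)$ --- this is where $(E)$ bypasses the convexity of $Y$ used in Theorem~\ref{laclave}. Lemma~\ref{lemma:approximation} then produces smoothings $\delta_{n,\gamma}$, and the partition of unity of Lemma~\ref{partition:unity} glues $\Delta_\gamma^n:=T_\gamma-\delta_{n,\gamma}$ with $\Delta_0^n$ (a smooth approximation of $F$) into a smooth $G\colon X\to\mathbb{R}$ satisfying $|G-F|<\varepsilon$ on $X$ and $\|G'(y)|_Z-D(y)\|_{Z^*}<\varepsilon$ on $Y$ (the analog of part~(ii) of Theorem~\ref{laclave}, now measured in $Z^*$). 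In the Lipschitz case, $\Lip(T_\gamma)=\|D(y_\gamma)\|_{Z^*}\le M$ and $\Lip(T_\gamma-F)\le M+\Lip(f)$, so Lemma~\ref{lemma:approximation}(iii) gives $\Lip(\delta_{n,\gamma})\le C_1(M+\Lip(f))$ and hence $\Lip(\Delta_\beta^n)\le(1+C_1)(M+\Lip(f))$; the estimate~\eqref{cotaparaG'} then produces $\|G'(x)\|_{X^*}\le\varepsilon/4+(1+C_1)(M+\Lip(f))$.

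\emph{Iteration and main obstacle.} To upgrade $|G-f|<\varepsilon$ on $Y$ to exact equality $H|_Y=f$, I iterate as in the proof of Theorem~\ref{extension}: apply the base construction to the residual $r_n:=f-\sum_{i<n}g_i$ (where $g_i:=G_i|_Y$) paired with $D-\sum_{i<n}D_i$ (where $D_i:=G_i'|_Z$), with tolerance $\varepsilon/2^n$; the sum $H:=\sum_n G_n$ then satisfies $H|_Y=f$. The main technical difficulty, absent from the closed-subspace case of Theorem~\ref{extension}, is that non-convexity of $Y$ prevents converting the pointwise estimate $\|D(y)-G_1'(y)|_Z\|_{Z^*}<\varepsilon$ into a small global Lipschitz constant for $r_1$, which is the input required by the Lipschitz version of the base construction. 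I plan to circumvent this by exploiting that $r_n$ is bounded by $\varepsilon/2^{n-1}$ on $Y$ and that $M_n:=\sup_Y\|D-\sum_{i<n}D_i\|_{Z^*}$ decays as $\varepsilon/2^{n-1}$: a careful Lipschitz extension of $r_n$ to $X$ (designed so that its Lipschitz constant on $X$ is controlled by $M_n+\varepsilon/2^{n-1}$) fed into the base construction produces $\Lip(G_n)\le(1+C_1)(M_n+\varepsilon/2^{n-1})$ for $n\ge 2$. The resulting series $\sum_{n\ge 2}\Lip(G_n)$ is majorised by a quantity proportional to $\varepsilon$, and combined with the first-step bound this yields $\Lip(H)\le(1+C_1)(M+\Lip(f))+\varepsilon$; arbitrariness of $\varepsilon$ gives the stated bound.
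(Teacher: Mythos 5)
Your necessity argument and your base construction are essentially the paper's: the paper isolates the base step as Theorem \ref{llavedos}, with the cover chosen from condition $(E)$, $T_\gamma(x)=f(y_\gamma)+\widehat{D}(y_\gamma)(x-y_\gamma)$, and Lemma \ref{lemma:approximation} applied to $T_\gamma-F$ on $B_\gamma\cap Y$. The problem is the step you yourself flag as the ``main obstacle'': your proposed circumvention does not work. You claim that because $r_n=f-\sum_{i<n}g_i$ is bounded by $\varepsilon/2^{n-1}$ on $Y$ and $M_n=\sup_Y\|D-\sum_{i<n}D_i\|_{Z^*}$ is small, one can produce a Lipschitz extension of $r_n$ with Lipschitz constant of order $M_n+\varepsilon/2^{n-1}$. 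But any extension of $r_n$ has Lipschitz constant at least $\Lip(r_n|_Y)$, and on a non-convex closed set this quantity is \emph{not} controlled by $\|r_n\|_\infty+M_n$: take $Y=\{0,\delta^2\}\subset\mathbb R$ and $r(0)=0$, $r(\delta^2)=\delta$; then $\|r\|_\infty=\delta$, condition $(E)$ holds vacuously for $D\equiv 0$ (small balls around each point meet $Y$ only in that point), yet $\Lip(r|_Y)=1/\delta$. So smallness in sup norm plus smallness of the derivative data gives no bound on the global Lipschitz constant of the residual, and without $\Lip(r_n|_Y)\to 0$ neither the convergence of $\sum_n\Lip(G_n)$ nor even the input hypotheses of your Lipschitz base step are available. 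Note that the same issue already arises at the second step of the iteration in the \emph{non}-Lipschitz case, since from step two onward one always feeds a bounded Lipschitz residual into the Lipschitz version of the construction.

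What is missing is precisely the extra conclusion the paper builds into Theorem \ref{llavedos}(ii): the approximation step itself must be shown to output $\Lip(f-g)<\varepsilon$ \emph{globally on $Y$}, not merely the pointwise estimate $\|D(y)-G'(y)\|_{Z^*}<\varepsilon$. The paper proves this by a direct telescoping computation with the partition of unity: writing $S^n_\beta(y)=\Delta^n_\beta(y)-f(y)$, it splits $(g(y)-f(y))-(g(z)-f(z))$ into a sum of terms of the form $(\psi_{n,\beta}(y)-\psi_{n,\beta}(z))S^n_\beta(\cdot)$, controlled by $\Lip(\psi_{n,\beta})\le L_{n,\beta}$ together with $|S^n_\beta|\le\varepsilon/(2^{n+2}L_{n,\beta})$, and terms $\psi_{n,\beta}(z)(S^n_\beta(y)-S^n_\beta(z))$, controlled by the \emph{local} Lipschitz bound $\Lip\bigl((T_\beta-F-\delta_{n,\beta})_{\mid_{B_\beta\cap Y}}\bigr)\le\varepsilon/4$ (which in turn requires recording the additional output $\Lip\bigl((\delta_{n,\beta})_{\mid_{B_\beta\cap Y}}\bigr)\le\varepsilon/8$ from Lemma \ref{lemma:approximation}(ii)); the point is that whenever both $y$ and $z$ lie in $\supp(\psi_{n,\beta})\subset B_\beta$ the local estimate from condition $(E)$ applies to the pair. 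With $\Lip(f-\sum_{i\le n}g_i)<\varepsilon/(2^n(1+C_1))$ secured at each stage, the iteration and the final bound $\Lip(H)\le(1+C_1)(M+\Lip(f))$ go through as you intend. You need to supply this computation (or an equivalent global Lipschitz estimate for the residual); as written, your proof has a genuine gap at that point.
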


The proof of Theorem \ref{extensionencerrados} follows the same lines as Theorem
\ref{extension}.  It can be deduced from Lemmas \ref{partition:unity}, \ref{lemma:approximation} and the following  statement, which is  similar to Theorem \ref{laclave}.
\begin{thm}\label{llavedos}
 Let $X$ be a Banach space with property $(*)$, $Y\subset X$ a closed subset
 and $f:Y\rightarrow \mathbb R$ a  function satisfying condition $(E)$ for some continuous
 function $D:Y\to Z^*$.
 Let us consider $F$ a continuous
extension of $f$ to $X$. Then, for every $\varepsilon>0$ there exists a \Csmooth function $G:X\longrightarrow\mathbb{R}$ such that if $g:=G_{\mid_Y}$ then
\begin{enumerate}
\item[{(i)}] $|F(x)-G(x)|< \varepsilon$ on $X$, and
\item[{(ii)}] $||D(y)-G'(y)||_{Z^*} < \varepsilon$ for all $y\in Y$ and $\Lip(f-g) < \varepsilon$.
\item[{(iii)}] Furthermore, assume that $f$ is Lipschitz on $Y$, $F$ is a Lipschitz extension of $f$ to $X$ and $M=\sup\{||D(y)||_{Z^*}:y\in Y\}<\infty$. Then the function $G$ can be chosen to be Lipschitz on $X$ and $\Lip(G)\leq \frac{\varepsilon}{4}+(1+C_1)M+C_1\Lip(F)$.
\end{enumerate}

\end{thm}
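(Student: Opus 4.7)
The plan is to mirror the proof of Theorem~\ref{laclave}, letting the mean value condition $(E)$ play the role that the fundamental theorem of calculus on the convex set $B_\gamma\cap Y$ played there. At each center $y_\gamma$ the first-order data is supplied by the prescribed $D(y_\gamma)\in Z^*$ in place of $f'(y_\gamma)$, and every affine comparison is carried out in the $Z^*$-norm instead of the $Y^*$-norm.

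First I use condition $(E)$ pointwise to choose, for each $y_\gamma\in Y$, a radius $r_\gamma>0$ so that
\begin{equation*}
|f(z)-f(w)-D(y_\gamma)(z-w)| \le \tfrac{\varepsilon}{8C_0}\,\|z-w\|, \qquad z,w\in Y\cap B(y_\gamma,r_\gamma).
\end{equation*}
I set $B_\gamma:=B(y_\gamma,r_\gamma)$, $B_0:=X\setminus Y$ and $\Sigma:=\Gamma\cup\{0\}$, and apply Lemma~\ref{partition:unity} to the open cover $\{B_\beta\}_{\beta\in\Sigma}$ of $X$ to produce an open refinement $\{W_{n,\beta}\}$ and a $\mathcal{C}^1$-smooth Lipschitz partition of unity $\{\psi_{n,\beta}\}$, writing $L_{n,\beta}:=\max\{\Lip\psi_{n,\beta},1\}$. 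For $\gamma\in\Gamma$ I define the affine extension $T_\gamma(x):=f(y_\gamma)+H(D(y_\gamma))(x-y_\gamma)$, where $H(D(y_\gamma))\in X^*$ is a norm-preserving Hahn--Banach extension of $D(y_\gamma)\in Z^*$. Since $z-w\in Z$ whenever $z,w\in Y$, the choice of $r_\gamma$ yields $\Lip\bigl((T_\gamma-F)|_{B_\gamma\cap Y}\bigr)\le \varepsilon/(8C_0)$, the analogue of property (B.3) of the proof of Theorem~\ref{laclave}.

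Next I apply Lemma~\ref{lemma:approximation} to $T_\gamma-F$ on $B_\gamma\cap Y$ to obtain $\mathcal{C}^1$-smooth functions $\delta_{n,\gamma}\colon X\to\Real$ that uniformly approximate $T_\gamma-F$ within $\varepsilon/(2^{n+2}L_{n,\gamma})$ and satisfy $\|\delta_{n,\gamma}'(y)\|_{X^*}\le \varepsilon/8$ on $B_\gamma\cap Y$, with the additional Lipschitz estimate $\Lip\delta_{n,\gamma}\le C_1(M+\Lip F)$ in case (iii). For $\beta=0$ I smoothly approximate $F$ as in Theorem~\ref{laclave}. Setting $\Delta_\beta^n:=T_\beta-\delta_{n,\beta}$ for $\beta\in\Gamma$ (and the smooth approximation of $F$ when $\beta=0$) and $G:=\sum_{n,\beta}\psi_{n,\beta}\Delta_\beta^n$, the verification of (i) is verbatim. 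For the derivative bound in (ii), the fact that $\supp\psi_{n,0}\subset X\setminus Y$ eliminates the $\beta=0$ contributions on $Y$, and the computation of display~\eqref{f'-g'} carries over with $D(y_{\beta_y(n)})$ in place of $f'(y_{\beta_y(n)})$ and $\|\cdot\|_{Z^*}$ in place of $\|\cdot\|_{Y^*}$, producing $\|D(y)-G'(y)\|_{Z^*}<\varepsilon$.

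The companion bound $\Lip(f-g)<\varepsilon$ is the most delicate point, since without convexity of $Y$ one cannot integrate the derivative estimate along a segment inside $Y$. To handle it I expand, for $z,w\in Y$, $(f-g)(z)-(f-g)(w)$ using the partition of unity and estimate each summand via $(E)$ at the associated center $y_{\beta_z(n)}$ together with the bound $\|\delta_{n,\gamma}'\|_{X^*}\le\varepsilon/8$ on $B_\gamma\cap Y$; the telescoping identity $\sum\psi'_{n,\beta}=0$ absorbs the cross-terms, and the initial $\varepsilon$ in the Lemma~\ref{lemma:approximation} approximations can be shrunk to compensate. I expect this to be the main obstacle of the proof. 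Finally, for the Lipschitz case (iii), the Hahn--Banach choice gives $\|H(D(y_\gamma))\|_{X^*}\le M$, hence $\Lip T_\gamma\le M$ and $\Lip\Delta_\beta^n\le (1+C_1)M+C_1\Lip F$ uniformly in $n,\beta$; the estimate of $\|G'(x)\|_{X^*}$ then proceeds verbatim from \eqref{cotaparaG'} via $\sum\psi'_{n,\beta}(x)=0$ to absorb the first sum into $\varepsilon/4$, giving the target bound $\Lip G \le \varepsilon/4+(1+C_1)M+C_1\Lip F$.
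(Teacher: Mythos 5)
Your construction is the same as the paper's: the same covering of $Y$ by balls $B_\gamma$ centered at points of $Y$, the affine maps $T_\gamma(x)=f(y_\gamma)+\widehat D(y_\gamma)(x-y_\gamma)$ built from norm-preserving extensions of $D(y_\gamma)$, Lemma \ref{partition:unity}, Lemma \ref{lemma:approximation} applied to $T_\gamma-F$ on $B_\gamma\cap Y$, and the same $G=\sum\psi_{n,\beta}\Delta^n_\beta$. Part (i), the bound $\|D(y)-G'(y)\|_{Z^*}<\varepsilon$, and part (iii) go through essentially as you describe, with one small omission: the radii $r_\gamma$ must also be chosen, using the continuity of $D$, so that $\|D(y)-D(y_\gamma)\|_{Z^*}\le\varepsilon/(8C_0)$ on $B_\gamma\cap Y$. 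Condition $(E)$ alone does not control $D(y)-D(y_\gamma)$ in the $Z^*$-norm, and this estimate is exactly what makes the analogue of the second sum in \eqref{f'-g'} close.

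The genuine gap is in the step you yourself flag as the main obstacle, $\Lip(f-g)<\varepsilon$, and it is twofold. First, the tool you invoke --- the pointwise bound $\|\delta'_{n,\gamma}(y)\|_{X^*}\le\varepsilon/8$ for $y\in B_\gamma\cap Y$ --- cannot control $|\delta_{n,\gamma}(z)-\delta_{n,\gamma}(w)|$ for $z,w\in B_\gamma\cap Y$: the segment $[z,w]$ lies in the ball $B_\gamma$ but in general leaves $Y$, and off $Y$ there is no bound on $\delta'_{n,\gamma}$. What is needed (the paper's condition (C.2')) is the restricted Lipschitz estimate $\Lip((\delta_{n,\gamma})_{\mid_{B_\gamma\cap Y}})\le C_0\Lip((T_\gamma-F)_{\mid_{B_\gamma\cap Y}})\le\varepsilon/8$, which is precisely the first half of part (ii) of Lemma \ref{lemma:approximation}; combined with $(E)$ it yields $\Lip((T_\gamma-F-\delta_{n,\gamma})_{\mid_{B_\gamma\cap Y}})\le\varepsilon/4$, the estimate that actually drives the argument. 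Second, ``the telescoping identity $\sum\psi'_{n,\beta}=0$ absorbs the cross-terms'' is not the mechanism here, since no derivatives of $G$ occur in this step. The paper's computation sets $S^n_\beta:=\Delta^n_\beta-f$ on $Y$, inserts $R(y,z)=\sum_{(n,\beta)\in F_y\cap F_z}\psi_{n,\beta}(z)S^n_\beta(y)$, uses that $\sum_{(n,\beta)\in F_z\setminus F_y}\psi_{n,\beta}(y)S^n_\beta(z)=0$, and splits $(g-f)(y)-(g-f)(z)$ into three sums: two are bounded by $\Lip(\psi_{n,\beta})\le L_{n,\beta}$ together with (C.1), and the middle one by $\psi_{n,\beta}(z)$ together with the restricted Lipschitz bound above; each contributes at most $\tfrac{\varepsilon}{4}\|y-z\|$, so no a posteriori shrinking of the approximation parameters is needed. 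Your sketch needs to be completed into this (or an equivalent) computation before the proof can be considered done.
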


Let us outline the required modifications of the proof of Theorem \ref{llavedos}.
The covering  $\{B(y_\gamma,r_\gamma):=B_\gamma\}_{\gamma\in\Gamma}$ of $Y$ by open balls of $X$  with centers $y_\gamma \in Y$ is selected
in such a way that
\begin{equation*}
||D(y)-D(y_\gamma)||_{Z^*}\le\frac{\varepsilon}{8C_0}
\quad
\text{  and  }
\quad
|f(z)-f(w)-D(y_\gamma)(z-w)| \le \frac{\varepsilon}{8C_0}  ||z-w|| ,\end{equation*}
 for every $y,z,w\in B_\gamma\cap Y$ (this can be done because $f$ satisfies condition  $(E)$ for $D$).
In this case, $T_\gamma$ is defined as
$T_\gamma(x)=f(y_\gamma)+\widehat{D}(y_\gamma)(x-y_\gamma)$, for $x\in X$, where $\widehat{D}(y_\gamma)$
is an extension of $D(y_\gamma)$ to $X$ with the same norm.
The functions $T_\gamma$ are $\mathcal{C}^\infty$-smooth on $X$,
     $T'_\gamma(x)=\widehat{D}(y_\gamma)$ for all $x\in X$ and satisfy that for all
     $z,w \in B_\gamma \cap Y$,
\begin{equation*}
|(T_\gamma-F)(z)-(T_\gamma-F)(w)|=|f(w)-f(z)-D(y_\gamma)(w-z)| \le\frac{\varepsilon}{8C_0}  ||z-w||.
\end{equation*}
Thus, $\Lip((T_\gamma-F){\mid_{B_{\gamma}\cap Y}})\le\frac{\varepsilon}{8C_0}.$
 We can apply Lemma \ref{lemma:approximation} to $T_\gamma-F$ on $B_{\gamma}\cap Y$ to obtain a \Csmooth   map $\delta_{n,\gamma}:X\longrightarrow \mathbb{R}$ satisfying (C.1), (C.2) and
 \begin{equation}\tag{C.2'}\Lip((\delta_{n,\gamma})_{\mid_{B_\gamma\cap Y}})\le \frac{\varepsilon}{8}.\end{equation}
From the above, we deduce that,  for all $y\in B_\gamma \cap Y$,
\begin{equation*}
||T_\gamma'(y)-D(y)-\delta_{n,\gamma}'||_{Z^*}\le \frac{\varepsilon}{4}
\quad \text{ and } \quad
\Lip((T_\gamma-F-\delta_{n,\gamma})_{\mid_{B_\gamma\cap Y}}) \le \frac{\varepsilon}{4}.
 \end{equation*}
The inequality $||D(y)-G'(y)||_{Z^*}< \varepsilon$ follows as in \eqref{f'-g'} (in the proof of Theorem \ref{laclave}).
Let us prove that $\Lip(g-f)<\varepsilon$, where $g=G_{\mid_Y}$. In order to simplify the notation
let us write $S^n_\beta(y):=\Delta^n_\beta(y)-f(y)$ for $y\in Y$ (where $\Delta^n_\beta$ is defined as in \eqref{deltanbeta}), 
and  $R(y,z):=\sum_{(n,\beta)\in F_y} \psi_{n,\beta}(z)S^n_\beta(y)$ for $y,z\in Y$. Notice that $\psi_{(n,\beta)}(z)=0$ whenever $(n,\beta)\not\in F_z$ and  thus, $R(y,z)=\sum_{(n,\beta)\in F_y\cap F_z} \psi_{n,\beta}(z)S^n_\beta(y)$ for $y,z\in Y$.
In addition, let us write $M(z,y):=\sum_{(n,\beta)\in F_z\setminus F_y} \psi_{n,\beta}(y)S^n_\beta(z)=0$ for $y,z\in Y$.
Now, from the above and properties (D.1) to (D.6), we obtain
\begin{align*}
 &|(g(y)-f(y))-(g(z)-f(z))|=& \\
 &=| \sum_{(n,\beta)\in F_y} \psi_{n,\beta}(y)S^n_\beta(y)-
 \sum_{(n,\beta)\in F_z}\psi_{n,\beta}(z) S^n_\beta(z)
-R(y,z) +{R}(y,z) +M(z,y)|=\\ &=
 | (\sum_{(n,\beta)\in F_y} \psi_{n,\beta}(y)S^n_\beta(y)-R(y,z) ) + ({R}(y,z)-
 \sum_{(n,\beta)\in F_z\cap F_y}\psi_{n,\beta}(z) S^n_\beta(z)
 ) \\
& \hspace{6.5cm}+ (M(z,y)-\sum_{(n,\beta)\in F_z\setminus F_y}\psi_{n,\beta}(z) S^n_\beta(z))|\le\\
 & \le \sum_{(n,\beta)\in F_y}|\psi_{n,\beta}(y)-\psi_{n,\beta}(z)||S^n_\beta(y)|
 +\sum_{(n,\beta)\in F_z\cap F_y}\psi_{n,\beta}(z)|S^n_\beta(y)-S^n_\beta(z)| + \\
 &\quad +\sum_{(n,\beta)\in F_z\setminus F_y}|\psi_{n,\beta}(y)-\psi_{n,\beta}(z)||S^n_\beta(z)|\le \sum_{(n,\beta)\in F_y}L_{n,\beta}||y-z|| \,\frac{\varepsilon}{2^{n+2}L_{n,\beta}}+
\\
 &    \quad +\sum_{(n,\beta)\in F_z\cap F_y}\psi_{n,\beta}(z) \frac{\varepsilon}{4} ||y-z||   +  \sum_{(n,\beta)\in F_z\setminus F_y}L_{n,\beta}||y-z|| \,\frac{\varepsilon}{2^{n+2}L_{n,\beta}}  <\varepsilon||y-z||.&
\end{align*}
In the Lipschitz case,  we additionally obtain that
$T_\beta-F$ is Lipschitz on $X$ and
$\Lip(T_\beta-F)\le \Lip(T_\beta)+\Lip(F) \le M+ \Lip(F)$ for every $\beta \in \Gamma$.
Thus, $\delta_{n,\beta}:X\longrightarrow \mathbb{R}$ satisfies conditions (C.1), (C.2), (C.2') and
$\Lip(\delta_{n,\beta})\le C_1\Lip(T_\beta-F)\le   C_1(M+\Lip(F))$.
Also, if $\Delta_\beta^n$ is defined as in \eqref{deltanbetalip}, then
$\Lip(\Delta_\beta^n)\leq \max\{(1+C_1) M +C_1\Lip(F),C_0 \Lip(F)\}=(1+C_1) M +C_1\Lip(F)$.
Similarly to  \eqref{cotaparaG'}, we obtain the following upper bound,
\begin{equation*}
||G'(x)||_{X^*}\le  \frac{\varepsilon}{4}+(1+C_1) M +C_1 \Lip(F).
\end{equation*}
(Recall, that here we do not assume $\varepsilon < \Lip(F)$.) This finishes the required changes for the
proof of Theorem \ref{llavedos}.

\medskip

Let us give also the necessary modifications for the proof of Theorem \ref{extensionencerrados}.
In this case, we apply  Theorem \ref{llavedos} to deduce the existence of a  \Csmooth function $G_1:X\to \Real$ such that if  $g_1:= {G_1}_{\mid_Y}$, we have
\begin{enumerate}
\item[(i)]  $|F(x)-G_1(x)| < \frac{\varepsilon}{2(1+C_1)}$ for $x\in X$,
\item[(ii)] $||D(y)-G_1'(y)||_{Z^*} < \frac{\varepsilon}{2(1+C_1)}$ for all $y\in Y$ and $\Lip(f-g_1) < \frac{\varepsilon}{2(1+C_1)}$.
\end{enumerate}
The function $f-g_1$  satisfies condition $(E)$ for
  $D-G_1'$. Thus, we apply again Theorem \ref{llavedos} (Lipschitz case) to $\overline{f-g_1}$ to obtain
a \Csmooth function $G_2:X\to\Real$ such that  if  $g_2:= {G_2}_{\mid_Y}$, we have
\begin{enumerate}
\item[(i)] $|\overline{(f-g_1)}(x)-G_2(x)| < \frac{\varepsilon}{2^2(1+C_1)}$ for $x\in X$,
\item[(ii)] $||D(y)-(G_1'(y)+G_2'(y))||_{Z^*} < \frac{\varepsilon}{2^2(1+C_1)}$ for all $y\in Y$,
   $\Lip(f-(g_1+g_2)) < \frac{\varepsilon}{2^2(1+C_1)}$, and
\item[(iii)]  $\Lip(G_2)\le\frac{\varepsilon}{2^4}+\varepsilon$.
\end{enumerate}
We find, by induction,  a sequence  $\{G_n\}_{n=1}^\infty$  of \Csmooth function such that for $n\ge 2$, the functions $G_n:X\to \Real$ and their restrictions $g_n:={G_n}_{\mid_Y}$ satisfy:
\begin{itemize}
\item[(i)]  $|\overline{(f-\sum_{i=1}^{n-1} g_i)}(x)-G_n(x)| < \frac{\varepsilon}{2^n(1+C_1)}$ for $x\in X$,
\item[(ii)] $||D(y)-\sum_{i=1}^n G_i'(y)||_{Z^*} < \frac{\varepsilon}{2^n(1+C_1)}$ for $y\in Y$, $\Lip (f-\sum_{i=1}^{n} g_i) < \frac{\varepsilon}{2^n(1+C_1)}$, and
\item[(iii)] $\Lip(G_n)\le \frac{\varepsilon}{2^{n+2}}+\frac{\varepsilon}{2^{n-2}}$.
\end{itemize}
In the Lipschitz case and for $\varepsilon<\frac{4}{9}\Lip(f)$, we obtain the upper bound,
$\Lip(H)\le (1+C_1)(M+\Lip(f))$.

\qed

It is worth pointing out the following corollary, where a characterization
of property $(*)$ is given.
\begin{cor}
Let X be a Banach space. The following statements are equivalent:
\begin{itemize}
\item[(i)] $X$ satisfies property $(*)$.
\item[(ii)] Assume that $f:Y\to\Real$ is a Lipschitz function satisfying condition $(E)$ for some continuous function $D:Y\to Z^*$ with
$M:=\sup\{||D(y)||_{Z^*}:y\in Y\}<\infty$.
Then, there is a \Csmooth and Lipschitz extension of $f$ to $X$,
$H:X\to\Real$,  with $\Lip(H)\le C_3(M+\Lip(f))$ (where $C_3$ is a constant depending
only on the space $X$).
\end{itemize}
\end{cor}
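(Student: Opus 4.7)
The implication (i)$\Rightarrow$(ii) is essentially Theorem \ref{extensionencerrados}. Indeed, if $X$ has property $(*)$ and $f:Y\to\Real$ is a Lipschitz function satisfying condition $(E)$ for some continuous $D:Y\to Z^*$ with $M<\infty$, then the Lipschitz part of Theorem \ref{extensionencerrados} provides a \Csmooth and Lipschitz extension $H$ with $\Lip(H)\le (1+C_1)(M+\Lip(f))$, so $C_3:=1+C_1$ works.

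For the converse (ii)$\Rightarrow$(i), my plan is to recover the uniform approximation of an arbitrary Lipschitz function on $X$ by applying (ii) on a sufficiently fine discrete net. Let $f:X\to\Real$ be Lipschitz; we may assume $\Lip(f)>0$, and fix $\varepsilon>0$. Choose $\delta>0$ with $\delta(1+C_3)\Lip(f)<\varepsilon$ and let $Y\subset X$ be a maximal $\delta$-separated subset of $X$ (which exists by Zorn's lemma). Being uniformly discrete, $Y$ is closed in $X$; by maximality, for every $x\in X$ there exists $y\in Y$ with $\|x-y\|<\delta$.

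Set $g:=f_{\mid_Y}$, which is Lipschitz with $\Lip(g)\le \Lip(f)$, and take $D\equiv 0:Y\to Z^*$. Since $Y$ is discrete in its subspace topology, the map $D$ is continuous and $M=\sup_{y\in Y}\|D(y)\|_{Z^*}=0$. Condition $(E)$ is satisfied trivially: for any $y\in Y$, choosing $r=\delta/2$ gives $Y\cap B(y,r)=\{y\}$, so the inequality in $(E)$ holds vacuously. Applying (ii) to $g$ yields a \Csmooth and Lipschitz extension $H:X\to\Real$ with $H_{\mid_Y}=g$ and $\Lip(H)\le C_3(M+\Lip(g))\le C_3\Lip(f)$.

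It remains to estimate $|f(x)-H(x)|$ on all of $X$. Given $x\in X$, pick $y\in Y$ with $\|x-y\|<\delta$. Using $H(y)=f(y)$, we bound
\begin{equation*}
|f(x)-H(x)|\le |f(x)-f(y)|+|H(y)-H(x)|\le (\Lip(f)+\Lip(H))\delta \le (1+C_3)\Lip(f)\delta<\varepsilon.
\end{equation*}
Thus $X$ has property $(*)$ with constant $C_0:=C_3$. The only delicate point is to ensure the discrete set $Y$ really makes condition $(E)$ trivial and the continuity requirement on $D$ costless; both follow from uniform discreteness, so no real obstacle arises.
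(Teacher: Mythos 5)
Your proof is correct, and for the nontrivial direction (ii)$\Rightarrow$(i) it takes a genuinely different route from the paper. The paper reduces (ii)$\Rightarrow$(i) to an external criterion, \cite[Proposition 1]{Hajek}: it suffices to produce, uniformly in $A\subset X$, a $\mathcal{C}^1$-smooth $K$-Lipschitz function vanishing on $A$ and equal to $1$ on $\{x:\dist(x,A)\ge 1\}$; this is obtained by applying (ii) to the two-valued function on $Y=\overline{A}\cup\{x:\dist(x,A)\ge 1\}$, which satisfies $(E)$ for $D\equiv 0$ since the two pieces are at positive distance. You instead verify property $(*)$ directly: you restrict the given Lipschitz $f$ to a maximal $\delta$-separated net $Y$, observe that uniform discreteness makes condition $(E)$ vacuous for $D\equiv 0$ (each $Y\cap B(y,\delta/2)$ is a singleton) and makes $Y$ closed and $D$ continuous, and then the extension $H$ given by (ii) is automatically a uniform $\varepsilon$-approximation of $f$ with $\Lip(H)\le C_3\Lip(f)$ once $\delta(1+C_3)\Lip(f)<\varepsilon$. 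All steps check out, including the existence of the net via Zorn's lemma and the handling of the trivial case $\Lip(f)=0$. What each approach buys: the paper's argument is shorter but leans on the H\'ajek--Johanis proposition, so the resulting constant $C_0$ is whatever that machinery produces; your argument is self-contained, avoids the external reference, and yields property $(*)$ with the explicit constant $C_0=C_3$. It is worth noting that your net construction exploits the full strength of hypothesis (ii) for badly disconnected closed sets $Y$, exactly as the paper's choice of $\overline{A}\cup B$ does, so neither argument could work with (ii) restricted to, say, convex $Y$.
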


\begin{proof}
We only need to check $(ii) \Rightarrow (i)$.   By  \cite[Proposition 1]{Hajek} it is enough to  prove that  there is a number $K\geq 1$ such that for every subset $A\subset X$ there is a \Csmooth and $K$-Lipschitz function $h_A:X\to [0,1]$ such that $h_A(x)=0$ for $x\in A$ and $h_A(x)=1$ for every $x\in X$ such that $\dist(x,A)\geq 1$.
For every subset $A$  of $X$, we consider the closed subsets $B=\{x\in X: \dist(x,A)\geq 1\}$ and $Y=\overline{A}\cup B$ of $X$. Let us define  the function $f:Y\to \Real$ as
\begin{equation*}
f(x):=
\begin{cases}
0 & \text{if $x\in \overline{A}$,}\\
1 & \text{if $x\in B$.}
\end{cases}
\end{equation*}
It is clear that  $f$ is $1$-Lipschitz and  satisfies condition $(E)$ for the continuous function
$D:Y\rightarrow Z^*$, $D(y)=0$ for every $y\in Y$ (thus,
$\sup\{||D(y)||_{Z^*}: y\in Y\}=0$).
By assumption, we can find a  \Csmooth and $C_3$-Lipschitz extension of $f$ to $X$,
$H:X\to \Real$ such that $H(x)=0$ for all $x\in A$ and $H(x)=1$ for all $x\in B$.
We take a $2$-Lipschitz and $C^\infty$-smooth function  $\theta:\Real \to [0,1]$ such that $\theta(t)=0$ whenever $t\leq 0$, and $\theta(t)=1$ whenever $t\geq1$. Let us define $h_A(x)=\theta(H(x))$. The \Csmooth function $h_A:X\rightarrow [0,1]$ is $2C_3$-Lipschitz,  $h_A(x)=0$ for $x\in A$ and $h_A(x)=1$ for $x\in B$.
\end{proof}

\medskip

Notice that  the following assertion can be proved along the same lines:
if every  real-valued  function  defined on a closed subset of a Banach space $X$ and satisfying condition (E) has a  \Csmooth extension to $X$, then every real-valued, continuous function on $X$ can be uniformly approximated by \Csmooth functions. In fact, both properties are equivalent in the separable case to  properties (i) and (ii) of the above corollary.

\medskip


Finally, let us mention, that after submitting the paper to the journal, we were informed of  the final version of \cite{Hajek}, where a similar statement to Lemma \ref{partition:unity} is established. In any case, we have decided to keep this lemma in order to give a more self-contained proof of the main result.




\end{document}